\DeclareMathOperator{\supp}{supp}
\DeclareMathOperator{\BMO}{BMO}
\newcommand{\abs}[1]{\lvert#1\rvert}
\newcommand{\Abs}[1]{\left\lvert#1\right\rvert}
\newcommand{\norm}[1]{\lVert#1\rVert}
\newcommand{\babs}[1]{\pmb{\lvert}#1\pmb{\rvert}}
\newcommand{\jap}[1]{\langle#1\rangle}
\newcommand{\bbT}{{\mathbb T}}
\newcommand{\bbR}{{\mathbb R}}
\newcommand{\bbC}{{\mathbb C}}
\newcommand{\bbN}{{\mathbb N}}
\newcommand{\bbZ}{{\mathbb Z}}
\newcommand{\bA}{\mathbf{A}}
\newcommand{\bu}{\mathbf{u}}
\newcommand{\bh}{{\mathbf{h}}}
\newcommand{\bq}{{\mathbf{q}}}
\newcommand{\bv}{{\mathbf{v}}}
\newcommand{\bw}{{\mathbf{w}}}
\newcommand{\bg}{{\mathbf{g}}}
\newcommand{\bU}{{\mathbf{U}}}
\newcommand{\calB}{\mathcal{B}}
\newcommand{\Sch}{\mathbf{S}}
\numberwithin{equation}{section}
\theoremstyle{plain}
\newtheorem{theorem}{\bf Theorem}[section]
\newtheorem*{theorem*}{Theorem 1.1$'$}
\newtheorem{lemma}[theorem]{\bf Lemma}
\theoremstyle{definition}
\theoremstyle{remark}
\newtheorem*{remark*}{\bf Remark}
\newtheorem{remark}[theorem]{\bf Remark}
\newtheorem{example}[theorem]{\bf Example}
\newcommand{\loc}{\mathrm{loc}}
\newcommand{\Hank}{\Gamma} 
\newcommand{\bHank}{{\mathbf{\Gamma}}}
\newcommand{\fh}{\widecheck{h}}
\newcommand{\fbh}{\widecheck{\mathbf h}}
\DeclareFontFamily{U}{mathx}{\hyphenchar\font45}
\DeclareFontShape{U}{mathx}{m}{n}{<5> <6> <7> <8> <9> <10>
<10.95> <12> <14.4> <17.28> <20.74> <24.88> mathx10}{}
\DeclareSymbolFont{mathx}{U}{mathx}{m}{n}
\DeclareMathAccent{\widecheck}{0}{mathx}{"71}
\begin{document}

\title[Sharp estimates for  Hankel operators]{Sharp estimates for singular values of Hankel operators}

\author{Alexander Pushnitski}
\address{Department of Mathematics, King's College London, Strand, London, WC2R~2LS, U.K.}
\email{alexander.pushnitski@kcl.ac.uk}

\author{Dmitri Yafaev}
\address{Department of Mathematics, University of Rennes-1,
Campus Beaulieu, 35042, Rennes, France}
\email{yafaev@univ-rennes1.fr}

\subjclass[2010]{47B35, 47B10}

\keywords{Hankel operators, discrete and continuous representations, singular values, Schatten classes}

\begin{abstract} 
We consider compact Hankel operators realized in $ \ell^2(\bbZ_+)$
as infinite matrices $\Gamma$ with matrix elements $h(j+k)$. 
Roughly speaking, we show that, for all $\alpha>0$, the singular values $s_{n}$ of $\Gamma$ satisfy the bound  $s_{n}= O(n^{-\alpha})$ as $n\to \infty$ provided $h(j)= O(j^{-1}(\log j )^{-\alpha})$
as $j\to \infty$.  These estimates on $s_{n}$ are sharp in the power scale of  $\alpha$.
Similar results are obtained for Hankel operators $\bHank$ realized in $L^2(\bbR_+)$ as integral operators with kernels $\bh(t+s)$. 
In this case the estimates of singular values of $\bHank$ are determined 
by the behavior  of $\bh(t)$ as $t\to 0$ and as $t\to \infty$.
\end{abstract}

\date{1 December 2014}

\maketitle

%%%%%%%%%%%%%%%%%%%%%%%%%%%%%%%%%%%%%%%%%%%%%%%%%
%%%%%%%%%%%%%%%%%%%%%%%%%%%%%%%%%%%%%%%%%%%%%%%%%
\section{Introduction}\label{sec.a1}
%%%%%%%%%%%%%%%%%%%%%%%%%%%%%%%%%%%%%%%%%%%%%%%%%
%%%%%%%%%%%%%%%%%%%%%%%%%%%%%%%%%%%%%%%%%%%%%%%%%

%%%%%%%%%%%%%%%%%%%%%%%%%%%%%%%%
\subsection{Basic notions}
%%%%%%%%%%%%%%%%%%%%%%%%%%%%%%%%

The theory of Hankel operators exists in two representations: discrete and continuous. 
In the discrete representation, one starts with a   sequence of complex numbers 
$\{h(j)\}_{j=0}^\infty$, and  one formally defines the Hankel operator $\Hank(h)$ in 
$\ell^2(\bbZ_+)$ as the 
``infinite matrix'' $\{h(j+k)\}_{j,k=0}^\infty$, i.e.
\begin{equation}
(\Gamma(h) u) (j)= \sum_{k=0}^\infty h(j+k) u (k), \quad u = (u (0), u (1), \ldots).
\label{eq:a5}
\end{equation}
The Nehari-Fefferman  theorem says that the Hankel operator $\Hank(h)$ is bounded on $\ell^2(\bbZ_+)$ if and only if the  \emph{symbol} of $\Hank(h)$, defined by 
\begin{equation}
\omega (\mu)=\sum_{j=0}^\infty h(j)\mu^j, \quad \abs{\mu}=1,
\label{eq:sy}
\end{equation}
belongs to the class 
$\BMO(\bbT)$ of functions of the bounded mean oscillation on the unit circle $\bbT$.
A simple sufficient condition for the boundedness of $\Hank(h)$ is the estimate
$h(j) =O(j^{-1})$
 as $j\to \infty$.

In the continuous  representation, one starts with a function 
 $\bh\in L^1_{\rm loc} (\bbR_{+})$ ($\bbR_+=(0,\infty)$),  and the 
\emph{integral Hankel operator} $\bHank(\bh)$ in $L^2(\bbR_+)$
  with the  \emph{kernel} $\bh$ is given by the formula
\begin{equation}
(\bHank(\bh)\bu)(t)=\int_0^\infty \bh(t+s)\bu(s)ds .
\label{eq:HH}
\end{equation}
Similarly to the discrete case,  the Hankel operator $\bHank(\bh)$ is bounded on $L^2(\bbR_+)$  if and only if the corresponding symbol belongs to the class 
$\BMO(\bbR)$.
A simple sufficient condition for the boundedness of $\bHank(\bh)$ is the estimate
$$
\abs{\bh(t)}\leq C/t, \quad t>0.
$$
Throughout the paper, we will use the boldface font for objects 
associated with the continuous representation.

\subsection{A conjecture}

Let $\omega$ be the symbol \eqref{eq:sy} of a Hankel operator $\Hank(h)$, and let $\Sch_p $ be the Schatten class
of compact operators (see Section~1.4). 
V.~Peller has shown that
\begin{equation}
\Hank(h)\in \Sch_p 
\quad \Leftrightarrow \quad 
\omega\in B^{1/p}_{pp}(\bbT),
\quad p>0,
\label{z1a}
\end{equation}
where 
 $B^{1/p}_{pp}(\bbT)$ is the Besov space; 
see the book \cite{Peller} for the proof, the history and references to the relevant papers of other authors. 
By using the real interpolation between Besov spaces, V.~Peller has deduced 
from \eqref{z1a} a \emph{necessary and sufficient} condition (given by the finiteness of the expression \eqref{ca9}) 
 for the estimate
\begin{equation}
s_n(\Hank(h))=O(n^{-\alpha}), \quad n\to\infty, \quad \alpha>0,
\label{z1b}
\end{equation}
for the singular values of $\Hank(h)$; we refer again to the book \cite{Peller} for the details. 
This condition is stated in terms of the inclusion of $ \omega$ into a   certain 
function class of the Besov-Lorentz type denoted  in \cite[Section 6.5]{Peller} by  $\mathfrak{B}_{p,\infty}^{1/p}$ where $p=1/\alpha$.
Similar results exist in the continuous case.

Our aim here is to give a simple \emph{sufficient} condition for \eqref{z1b} directly 
in terms of the sequence $h(j)$. 
It is expected that the faster rate of convergence  $h(j)\to0$ as $j\to\infty$ 
implies the faster rate of convergence of the singular values 
$s_n(\Hank(h))\to0$ as $n\to\infty$.  We show that the correct condition on the decay of $h(j)$ is given  in the logarithmic scale. To be more precise, we discuss the following

\noindent
\textbf{Conjecture:}
\begin{equation}
h(j)=O(j^{-1}(\log j)^{-\alpha})
\quad
\Rightarrow
\quad
s_n(\Hank(h))=O(n^{-\alpha}),\quad \alpha>0.
\label{a1g}
\end{equation}
Let us consider two special cases that motivate this conjecture. 

(i) $\alpha=0$.
It is well known (see, e.g.,  \cite{Peller}) 
that the Hankel operator $\Gamma(h)$ (the Hilbert matrix) corresponding to the sequence
\begin{equation}
h(j)=\frac1{j+1}, \quad j\geq0,
\label{a1d}
\end{equation}
  is bounded (but not compact).
It follows that
\begin{equation}
h(j)=O(1/j), \quad j\to\infty
\quad \Rightarrow \quad 
\Hank(h)\in\calB
\label{a1h}
\end{equation}
($\calB$ is the class of bounded operators).

(ii) $\alpha>1/2$. 
A Hankel operator  $\Gamma$ belongs to the  Hilbert-Schmidt class $\Sch_2$ if and only if
\begin{equation}
\sum_{n=1}^\infty s_n(\Hank(h))^2=\sum_{j=0}^\infty (j+1)\abs{h(j)}^2< \infty.
\label{z12}
\end{equation}
Obviously, the series in the r.h.s.  
converges if $h(j)= O(j^{-1}(\log j)^{-\alpha})$ for some $\alpha> 1/2$, 
and the series in the l.h.s. converges if 
$s_n(\Hank(h))=O(n^{-\alpha})$ for some $\alpha>1/2$.

The main purpose of this paper is to show that  \emph{the above conjecture is partially true}.
More precisely, we prove that 
\emph{the conjecture is true for $\alpha< 1/2$;}
for $\alpha\geq 1/2$, we prove that the conclusion of \eqref{a1g} becomes true if we 
assume that the sequence $h(j)$ behaves sufficiently regularly, i.e. if we 
impose appropriate additional assumptions on the sequence of differences 
$h(j+1)-h(j)$ and on its higher order iterates. 
We also obtain analogous results in the continuous case. 
Precise statements are given in Section~\ref{sec.a}. 

Let us comment on the proofs. 
For $\alpha\geq 1/2$ we deduce our results from  Peller's necessary and sufficient condition 
$ \omega\in \mathfrak{B}_{p,\infty}^{1/p}$ for the estimate \eqref{z1b}. 
For $\alpha<1/2$ our approach is more direct and relies on the real interpolation 
between the cases (i) and (ii) (where $\alpha$ is arbitrarily close to $1/2$) mentioned above.

\subsection{Discussion}

Our results are quite simple and efficient. 
However, our sufficient condition for \eqref{z1b} is  far from being necessary because we do not take into account
possible oscillations of $h(j)$. 
In order to illustrate this point, let us observe   that in the limit $\alpha\to0$ 
our results reduce to the well-known implication \eqref{a1h}. 
There are many sequences that fail to satisfy  $h(j)=O(j^{-1})$   but such that $\Hank(h)\in\mathcal B$. Consider, for example,  $h(j)=n^{-2}$ for $j=n^4$, $n\in\bbN$, and $h(j)=0$ otherwise. Obviously,  $h(j)=O(j^{-\alpha})$ for $\alpha\leq 1/2$ only. However the function \eqref{eq:sy} is bounded in the unit disc and hence $\Hank(h)\in\mathcal B$ by the Nehari theorem.

To a large extent, our aim is to  provide  technical tools for \cite{II}, 
where we study the   asymptotic behavior of eigenvalues of compact self-adjoint Hankel operators. 
In particular, in \cite{II} we show
that for the sequence
\begin{equation}
h(j)=j^{-1}(\log j)^{-\alpha} , \quad j\geq2, \quad \alpha>0,
\label{z13}
\end{equation}
  the  asymptotics 
\begin{equation}
s_n(\Hank(h))=
v(\alpha)n^{-\alpha}
+o(n^{-\alpha}), \quad n\to\infty,
\label{z14}
\end{equation}
holds with    the explicit constant $v(\alpha)$  given by
\begin{equation}
v(\alpha) = 2^{-\alpha} \pi^{1-2\alpha}\Big(B(\tfrac{1}{2\alpha},\tfrac12)\Big)^\alpha,
\label{eq:V}
\end{equation}
where $B(\cdot,\cdot)$ is the standard Beta function. 
Clearly, \eqref{z13}, \eqref{z14} show that the 
exponent $\alpha$ in the right-hand side of \eqref{a1g} is optimal in the class of Hankel operators we consider.

\subsection{Schatten classes}

Let us recall some basic information on ideals of compact operators in a Hilbert space (see the books \cite{BSbook,GK}). 
We denote by 
$\calB$ the set of all bounded operators,   $\norm{\cdot}$ is the operator norm; $\Sch_\infty$ is the set of all compact operators. 
Let $\{s_n(\Hank)\}_{n=1}^\infty$ be  the non-increasing sequence of singular values of $\Hank \in \Sch_\infty$
(i.e. the eigenvalues of $\sqrt{\Hank^*\Hank}$).
For $p>0$, the Schatten class $\Sch_p$ and the weak  Schatten class $\Sch_{p,\infty}$ of compact operators  
are defined by the conditions 
$$
\Hank\in\Sch_p
\quad \Leftrightarrow \quad 
\norm{\Hank}_{\Sch_p}^p:=
\sum_{n=1}^\infty s_n(\Hank)^p<\infty
$$
and
$$
\Hank\in\Sch_{p,\infty}
\quad \Leftrightarrow \quad 
\norm{\Hank}_{\Sch_{p,\infty}}:=
\sup_{n\geq1} n^{1/p}s_n(\Hank)
<\infty.
$$
The classes $\Sch_p  $ and  $\Sch_{p,\infty}$ are the ideals of the algebra $\calB$ 
with the  quasi-norms
$\norm{\cdot}_{\Sch_{p }}$ and $\norm{\cdot}_{\Sch_{p,\infty}}$. 
The class $\Sch_{p,\infty}^0$  
is the closed linear subspace of $\Sch_{p,\infty}$ defined by 
$$
\Hank\in \Sch_{p,\infty}^0
\quad\Leftrightarrow\quad
\lim_{n\to\infty} n^{1/p}s_n(\Hank)=0.
$$
Equivalently, $\Sch_{p,\infty}^0$ may be defined as the closure of the set of all 
finite rank operators in the quasi-norm 
$\norm{\cdot}_{\Sch_{p,\infty}}$. We have
$$
\Sch_p \subset \Sch_{p,\infty}^0\subset \Sch_{p,\infty} \subset  \Sch_\infty.
$$

%%%%%%%%%%%%%%%%%%%%%%%%%%%%%%%%%%%%%
\subsection{Plan of the paper}
%%%%%%%%%%%%%%%%%%%%%%%%%%%%%%%%%%%%%

We state  our main results in Section~2. Their proofs are given in Sections~3 and 4 for the continuous and discrete cases, respectively. It is convenient to start the proofs with the continuous case because integration by parts 
is more visual than the corresponding procedure (the Abel transformation for series)  in the discrete case. 

Throughout the rest of the paper, $C$ (possibly with indices) denotes 
constants in estimates, and the value of $C$ may change from line to line. 
Notation $\abs{X}$ means the Lebesgue measure of the set $X\subset\bbT$ or of $X\subset\bbR$.
We make a standing assumption that the exponents $p>0$ and $\alpha>0$ are 
related by $\alpha=1/p$.

%%%%%%%%%%%%%%%%%%%%%%%%%%%%%%%%%%%%%%%%%%%%%%%%%
%%%%%%%%%%%%%%%%%%%%%%%%%%%%%%%%%%%%%%%%%%%%%%%%%
\section{Main results}\label{sec.a}
%%%%%%%%%%%%%%%%%%%%%%%%%%%%%%%%%%%%%%%%%%%%%%%%%
%%%%%%%%%%%%%%%%%%%%%%%%%%%%%%%%%%%%%%%%%%%%%%%%%

\subsection{Discrete representation}
Let the Hankel operator $\Hank(h)$ be defined by formula \eqref{eq:a5} in the space $\ell^2(\bbZ_+)$. We first justify the conjecture  \eqref{a1g}
for $\alpha< 1/2$. This case turns out to be significantly simpler. 
Here $p>2$   and  $\Sch_{p,\infty}\not\subset\Sch_{2}$.

%%%%%%%%%%%%%%%%%%
\begin{theorem}\label{thm.z1}
%%%%%%%%%%%%%%%%%%
Let $\alpha< 1/2$ and let $\{h(j)\}_{j=0}^\infty$ be a sequence of complex numbers
such that 
\begin{equation}
h(j)=O(j^{-1}(\log j)^{-\alpha}), \quad
j\to\infty.
\label{z3}
\end{equation}
Then   the singular values of the corresponding Hankel operator $\Hank(h)$ satisfy the estimate
\begin{equation}
s_n(\Hank(h))=O(n^{-\alpha}), \quad n\to\infty.
\label{ca6}
\end{equation}
Moreover, there is a constant $ C(\alpha)$ such that
$$
\norm{\Hank(h)}_{\Sch_{p,\infty}}
\leq
C(\alpha)
\sup_{j\geq0}
(j+1)(\log (j+2))^{\alpha}\abs{h(j)}, \quad p=1/\alpha.
$$
\end{theorem}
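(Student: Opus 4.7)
The plan is to apply real interpolation between the two endpoint cases highlighted in the introduction. Set $X_0$ to be the sequence space with norm $\|h\|_{X_0}=\sup_{j}(j+1)|h(j)|$ and $X_1$ the weighted $\ell^2$ space with norm $\|h\|_{X_1}^2=\sum_{j}(j+1)|h(j)|^2$. The implication \eqref{a1h}, via the Hilbert matrix bound together with the standard positive-matrix comparison, gives $\|\Gamma(h)\|\leq C\|h\|_{X_0}$, while \eqref{z12} reads $\|\Gamma(h)\|_{\Sch_2}=\|h\|_{X_1}$. Thus the linear map $h\mapsto \Gamma(h)$ is bounded from each $X_i$ into the corresponding endpoint ideal ($\calB$ and $\Sch_2$).

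The heart of the proof is to estimate the $K$-functional
$$
K(t, h; X_0, X_1)=\inf\bigl\{\|u\|_{X_0}+t\|v\|_{X_1}:h=u+v\bigr\}
$$
by a single index-cutoff $N$. I would put $u(j)=h(j)\mathbbm{1}_{j\geq N}$ and $v(j)=h(j)\mathbbm{1}_{j<N}$, and write $M:=\sup_{j\geq 0}(j+1)(\log(j+2))^\alpha|h(j)|$. The hypothesis immediately gives $\|u\|_{X_0}\leq CM(\log N)^{-\alpha}$; moreover, since $\alpha<1/2$,
$$
\|v\|_{X_1}^2\leq CM^2\sum_{2\leq j<N}(j+1)^{-1}(\log j)^{-2\alpha}\leq \frac{CM^2}{1-2\alpha}(\log N)^{1-2\alpha}.
$$
Balancing the two contributions---that is, taking $(\log N)^{-\alpha}\sim t(\log N)^{(1-2\alpha)/2}$, i.e.\ $\log N\sim t^{-2}$---produces $K(t, h; X_0, X_1)\leq CMt^{2\alpha}$ for all $t\in(0,1]$, and for $t\geq 1$ the same bound holds since $K(t,h)\leq \|h\|_{X_0}\leq CM$. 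This expresses that $h\in(X_0,X_1)_{2\alpha,\infty}$ with norm $\leq CM$.

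By the real interpolation theorem, $\Gamma$ then carries $(X_0,X_1)_{2\alpha,\infty}$ boundedly into $(\calB,\Sch_2)_{2\alpha,\infty}$. The classical identification $(\calB,\Sch_2)_{\theta,\infty}=\Sch_{2/\theta,\infty}$, which results from a direct $K$-functional computation for compact operators (see e.g.\ \cite{BSbook}), with $\theta=2\alpha$ gives $\Sch_{1/\alpha,\infty}=\Sch_{p,\infty}$, yielding $\|\Gamma(h)\|_{\Sch_{p,\infty}}\leq CM$ and hence \eqref{ca6}.

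The main obstacle I anticipate is the $K$-functional calculation itself: one must decompose $h$ by the \emph{index} $j$, not by the \emph{value} $|h(j)|$. A value-cutoff at level $t$ places the ``large'' part on a set of size up to $e^{(M/t)^{1/\alpha}}$, and the corresponding finite-rank approximation gives only $s_n=O((\log n)^{-\alpha})$, which is exponentially weaker than the target $O(n^{-\alpha})$. The index cutoff, combined with the fact that the partial sum $\sum_{j<N}(j+1)^{-1}(\log j)^{-2\alpha}$ grows only as a power of $\log N$ when $\alpha<1/2$, is what correctly produces the exponent $t^{2\alpha}$ and, through interpolation, the gain from $(\log n)^{-\alpha}$ to $n^{-\alpha}$.
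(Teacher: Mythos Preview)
Your proof is correct and takes essentially the same route as the paper: real interpolation of the linear map $h\mapsto\Gamma(h)$ between the Hilbert-matrix bound ($X_0\to\calB$) and the Hilbert--Schmidt identity ($X_1\to\Sch_2$). The only cosmetic difference is that the paper packages the source side via the standard identification $(\ell^2_v,\ell^\infty_v)_{\theta,\infty}=\ell^{p,\infty}_v$ (weight $v(j)=1/(j+1)$, $\theta=1-2/p$) and then checks $h/v\in\ell^{p,\infty}_v$ by a distribution-function estimate, whereas you bypass the Lorentz-space identification and estimate the $K$-functional directly with an index cutoff---the two computations are equivalent.
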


Next, consider the case $\alpha\geq 1/2$. 
Here, besides \eqref{z3}, we require some additional assumptions. 
For a sequence $h$, we denote by $h^{(m)}$, $m=0,1,2,\dots$  the sequences
of iterated differences. 
Those are the sequences defined iteratively by setting $h^{(0)} (j)=h  (j)$  
and 
$$
h^{(m)}(j)=h^{(m-1)}(j+1)-h^{(m-1)}(j), \quad j\geq0.
$$
The number of times we need to iterate will be determined
by the integer 
\begin{equation}
M(\alpha)=
\begin{cases}
[\alpha]+1,& \text{ if } \alpha\geq 1/2,
\\
0, & \text{ if } \alpha < 1/2,
\end{cases}
\label{c5}
\end{equation}
where $[\alpha]=\max\{ m\in\bbZ_{+}: m\leq \alpha\}$.
The following result includes Theorem~\ref{thm.z1} as a particular case.

%%%%%%%%%%%%%%%%%%
\begin{theorem}\label{thm.a1}
%%%%%%%%%%%%%%%%%%
Let $\alpha>0$, and let $M=M(\alpha)$ be defined by \eqref{c5}.
Let  $h$ be a sequence of complex numbers that satisfies
\begin{equation}
h^{(m)}(j)
=
O(j^{-1-m}(\log j)^{-\alpha}), \quad j\to\infty,
\label{ca5}
\end{equation}
for all $m=0,1,\dots,M$.
Then the estimate
\begin{equation}
s_n(\Hank(h))=O(n^{-\alpha}), \quad n\to\infty,
\label{eq:ca5}
\end{equation}
holds,
and there is a constant $C(\alpha)$ such that 
\begin{equation}
\norm{\Hank(h)}_{\Sch_{p,\infty}}
\leq
C(\alpha)
\sum_{m=0}^M
\sup_{j\geq0}
(j+1)^{1+m}(\log (j+2))^{\alpha}\abs{h^{(m)}(j)}, \quad p=1/\alpha.
\label{z7}
\end{equation}
\end{theorem}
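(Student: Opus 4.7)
My plan is to treat the two regimes of $\alpha$ separately, corresponding to the two branches of the definition \eqref{c5} of $M(\alpha)$.

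\textit{Subcritical case $\alpha<1/2$.} Here $M(\alpha)=0$, so the hypothesis \eqref{ca5} collapses to the single decay estimate \eqref{z3} and the statement reduces to Theorem~\ref{thm.z1}. I would establish this by real interpolation between two endpoint mappings of the linear map $h\mapsto\Hank(h)$: the Nehari theorem gives a bounded map from the weighted space $\{h:\sup_j(j+1)|h(j)|<\infty\}$ into $\calB$ (cf.\ \eqref{a1h}), while identity \eqref{z12} gives a bounded map from $\{h:\sum_j(j+1)|h(j)|^2<\infty\}$ into $\Sch_2$. Real interpolation between the pair $(\calB,\Sch_2)$ on the operator side and the two weighted sequence spaces on the symbol side then produces a bounded map into $\Sch_{p,\infty}$ for every $p>2$; the logarithmic weight $(\log(j+2))^{-\alpha}$ in \eqref{z3} is precisely the intermediate norm dictated by Calder\'on-type interpolation of weighted sequence spaces.

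\textit{Supercritical case $\alpha\geq 1/2$.} Here I would invoke Peller's necessary and sufficient condition: $\Hank(h)\in\Sch_{p,\infty}$ with $p=1/\alpha$ if and only if the symbol $\omega$ given by \eqref{eq:sy} belongs to the Besov--Lorentz space $\mathfrak{B}^{1/p}_{p,\infty}$. The computational core is to apply Abel summation $M+1$ times to the series \eqref{eq:sy}, which is the discrete counterpart of iterated integration by parts: this rewrites $\omega(\mu)$ as a polynomial in $\mu$ of degree at most $M$ (with coefficients built from $h(0),\dots,h(M)$) plus a remainder in which each $h(j)$ is replaced by $h^{(M+1)}(j)$ multiplied by an explicit rational factor in $(1-\mu)$. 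Combined with the decay hypothesis \eqref{ca5}, this representation yields quantitative control of $\omega$ near the unit circle, which in turn feeds into Besov--Lorentz membership via a standard Littlewood--Paley-type argument.

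The main obstacle lies in this last step of the supercritical case: translating the pointwise decay \eqref{ca5} of the iterated differences into the Besov--Lorentz quasi-norm of $\omega$. Since $\mathfrak{B}^{1/p}_{p,\infty}$ is itself defined by real interpolation between Besov spaces and is usually described via weighted $\ell^{p,\infty}$ control of dyadic pieces of $\omega$, one must verify carefully that the logarithmic weight $(\log j)^{-\alpha}$ survives the $M+1$ Abel transforms. My strategy is to first work out the analogous continuous argument (where integration by parts is more transparent and dyadic estimates on $\bbR_+$ are cleaner, as the authors explicitly note) and then port those estimates back to the discrete setting via Abel summation. Tracking the constants through the Abel transforms and through Peller's bound should deliver the quasi-norm estimate \eqref{z7}, from which the asymptotic statement \eqref{eq:ca5} is immediate.
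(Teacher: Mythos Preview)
Your high-level strategy matches the paper's: real interpolation between $\calB$ and $\Sch_2$ for $\alpha<1/2$, and Peller's Besov--Lorentz criterion verified via summation by parts for $\alpha\geq 1/2$. The subcritical case is essentially correct as you describe it (the paper phrases it as showing that $h/v\in\ell^{p,\infty}_v$ with $v(j)=(j+1)^{-1}$, which is the same argument).

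In the supercritical case, however, two concrete points need correction. First, you propose $M+1$ Abel transforms, which would produce $h^{(M+1)}$; but the hypothesis \eqref{ca5} only controls $h^{(m)}$ for $m\leq M$. The paper performs exactly $M$ summations by parts, so that only $h^{(0)},\dots,h^{(M)}$ appear. Second, and more importantly, you apply Abel summation to the full symbol $\omega(\mu)$, which introduces a factor $(1-\mu)^{-M}$ that is genuinely singular at $\mu=1$ (the paper explicitly flags this singularity). The paper instead works from the outset with the dyadic pieces $\fh_n(\mu)=\sum_j w_n(j)h(j)\mu^j$ that already appear in Peller's functional \eqref{ca9}: each $\fh_n$ is a trigonometric polynomial, so the $M$-fold summation by parts is harmless, and the resulting factor $\abs{1-\mu}^{-M}$ is handled by splitting the $L^q$ integral over $\bbT$ into the region $\abs{\theta}<2^{-n}$ (where one uses the trivial $L^\infty$ bound \eqref{ca21}) and the region $\abs{\theta}\geq 2^{-n}$ (where $\abs{1-e^{i\theta}}^{-Mq}$ is integrable for any $q>1/M$). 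This order of operations---localize first, then sum by parts---is essential; reversing it as you propose does not yield usable $L^q$ estimates on the dyadic blocks.
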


%%%%%%%%%%%%%%%%%%
\begin{theorem}\label{cr.a3}
%%%%%%%%%%%%%%%%%%
If \eqref{ca5} holds with $o$ instead of $O$ for all $m=0,1,\dots,M$, then we have
\begin{equation}
s_n(\Hank(h))=o(n^{-\alpha}), \quad n\to\infty.
\label{ca6o}
\end{equation} 
\end{theorem}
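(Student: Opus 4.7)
The plan is to derive this $o$-version from Theorem~\ref{thm.a1} by approximating $\Gamma(h)$ in the $\Sch_{p,\infty}$-quasinorm by finite rank Hankel operators. Since $\Sch_{p,\infty}^0$ is defined as the closure of finite rank operators in $\Sch_{p,\infty}$ (equivalently, as those compact operators with $n^{1/p}s_n(T)\to 0$), establishing \eqref{ca6o} is the same as showing $\Gamma(h)\in\Sch_{p,\infty}^0$. The finite-rank approximants will come from a smooth truncation of the symbol sequence itself.

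Fix a smooth $\chi:[0,\infty)\to[0,1]$ with $\chi(t)=1$ for $t\le1$ and $\chi(t)=0$ for $t\ge2$, and set $\chi_N(j):=\chi(j/N)$. Write
$$
h = h\chi_N + g_N, \qquad g_N(j):=(1-\chi_N(j))h(j).
$$
The sequence $h\chi_N$ is supported in $\{0,1,\dots,2N\}$, so the matrix $\{(h\chi_N)(j+k)\}$ vanishes off the anti-triangle $j+k\le 2N$ and $\Gamma(h\chi_N)$ has rank at most $2N+1$. It therefore suffices to prove that $\|\Gamma(g_N)\|_{\Sch_{p,\infty}}\to 0$ as $N\to\infty$, and we aim to do this by applying the estimate \eqref{z7} of Theorem~\ref{thm.a1} to $g_N$ and showing that each term in that sum tends to zero as $N\to\infty$.

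The analysis of $g_N^{(m)}(j)$ rests on the discrete Leibniz identity
$$
(fg)^{(m)}(j) = \sum_{k=0}^m \binom{m}{k}\, f^{(k)}(j)\,g^{(m-k)}(j+k),
$$
applied to $f=h$, $g=1-\chi_N$. For $j\ge 2N$ one has $g_N=h$ near $j$, so $g_N^{(m)}(j)=h^{(m)}(j)$, and
$
\sup_{j\ge 2N}(j+1)^{1+m}(\log(j+2))^\alpha|h^{(m)}(j)|\to 0
$
directly from the assumed $o$-bound on $h^{(m)}$. For $j<2N$, the factor $(1-\chi_N)^{(m-k)}(j+k)$ is supported where $j\asymp N$; by the discrete mean value theorem it satisfies $|(1-\chi_N)^{(l)}|\le C_l N^{-l}$ for $l\ge 1$, while $|1-\chi_N|\le 1$. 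Combined with $|h^{(k)}(j)|=o(j^{-1-k}(\log j)^{-\alpha})$, each term of the Leibniz sum is $o(N^{-1-m}(\log N)^{-\alpha})$ uniformly in $j\asymp N$, which exactly cancels the weight $(j+1)^{1+m}(\log(j+2))^\alpha$ in \eqref{z7} and gives $o(1)$ as $N\to\infty$. Inserting into \eqref{z7} yields $\|\Gamma(g_N)\|_{\Sch_{p,\infty}}\to 0$, hence $\Gamma(h)\in\Sch_{p,\infty}^0$, which is \eqref{ca6o}.

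The main obstacle is not conceptual but bookkeeping: one must verify that the little-$o$ control on $h^{(k)}$ propagates uniformly through the discrete Leibniz expansion in the transition region $N\le j\le 2N$, and that the contribution of each mixed term of type $h^{(k)}\cdot(1-\chi_N)^{(m-k)}$ with $k<m$ indeed matches the $(\log N)^{-\alpha}$ factor in the weight (a consequence of $\log j\asymp\log N$ there). Once this is checked, the argument is a straightforward application of Theorem~\ref{thm.a1} and the characterization of $\Sch_{p,\infty}^0$ as the closure of finite-rank operators in $\Sch_{p,\infty}$.
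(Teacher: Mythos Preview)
Your proposal is correct and follows essentially the same approach as the paper: smooth truncation $h_N=h\,\chi_N$ giving a finite-rank Hankel operator, then application of the estimate \eqref{z7} from Theorem~\ref{thm.a1} together with the discrete Leibniz rule to show $\|\Gamma(h-h_N)\|_{\Sch_{p,\infty}}\to0$. The paper's version is slightly more condensed (it does not split into the regions $j\ge 2N$ and $j\asymp N$ explicitly, recording instead the single uniform bound $\sup_j (j+1)^k|\zeta_N^{(k)}(j)|\le C_k$), but the substance is identical.
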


Theorems~\ref{thm.z1}, \ref{thm.a1} and \ref{cr.a3} are proven in Section~\ref{sec.ca}.
As was already mentioned, Theorem~\ref{thm.z1} admits a
  direct   proof based on the real interpolation between  the cases  
  ${\Hank} (h)\in\Sch_2$ and   ${\Hank} (h)\in \calB$. 
In the proof of 
Theorem~\ref{thm.a1}, we proceed from  the results of \cite{Peller}
which give necessary 
and sufficient conditions for  $\Gamma  (h)\in\Sch_{p }$ and hence  for  $\Gamma(h)\in\Sch_{p,\infty}$
in terms of the symbol \eqref{eq:sy} of this operator. We prove that 
under the hypothesis of Theorem~\ref{thm.a1}, 
such conditions
are satisfied. Theorem~\ref{cr.a3} is deduced from Theorem~\ref{thm.a1} by simple approximation arguments.

\begin{remark}
\begin{enumerate}
\item
As already mentioned above,
relations  \eqref{z13} and \eqref{z14} show that
the exponent $\alpha$ in \eqref{ca6} is optimal.

\item
Theorem~\ref{thm.a1} is false if no  conditions on the iterated differences 
$h^{(m)} (j)$ are imposed. 
Further, while our condition  on the exponent $M(\alpha)$ is probably not optimal, it is not far from being so.  
Indeed, Example~\ref{exa} shows that, for $\alpha \geq  2$,  one cannot
take $M(\alpha)=[\alpha]-2$ in Theorem~\ref{thm.a1}.

\item
 Some sufficient conditions for the inclusion $\Hank(h)\in \Sch_1$,
stated in terms of the sequences $h$, $h^{(1)}$ and $h^{(2)}$ were found in \cite{Bonsall}. 
They are similar in spirit to Theorem~\ref{thm.a1}.

\item
If
$h(j)= O (j^{-\gamma})$ for some $\gamma>1$ as $j\to\infty$ and if some conditions on the iterated differences $h^{(m)} (j)$ are satisfied, then one can expect that the singular values $s_{n} (\Gamma(h))$ decay faster than any power of $n^{-1}$ as $n\to\infty$. 
In fact,  H.~Widom  showed  in \cite{Widom} that for $h(j)=  (j+1)^{-\gamma}$, $\gamma>1$,  the 
corresponding Hankel operator
 $\Gamma(h)$  is non-negative   and its eigenvalues converge to zero \emph{exponentially} fast:
$$
\lambda_n^+(\Gamma(h))=\exp(-\pi \sqrt{2\gamma n}+o(\sqrt{n})), \quad n\to\infty.
$$
Some additional results in this direction were obtained in \cite{Parf}.
\end{enumerate}
\end{remark}

If a sequence $h(j)$ satisfies  \eqref{ca5} for $m=0$ and if $\zeta\in \bbT$, then the sequence  
$ \zeta^j h(j)$ satisfies the same condition; but for $m>0$ this  implication is no longer  true. Nevertheless we have the following simple generalization of Theorems~\ref{thm.a1} and   \ref{cr.a3}.

%%%%%%%%%%%%%%%%%%
\begin{theorem}\label{cr.a4}
%%%%%%%%%%%%%%%%%%
Let   the sequences  $h_1,h_2,\ldots,h_L$   
satisfy the hypothesis of Theorem~\ref{thm.a1} (resp. Theorem~\ref{cr.a3}), and 
let $\zeta_\ell \in\bbT$, $\ell=1,\dots,L$. 
Then the estimate  \eqref{ca6} (resp. \eqref{ca6o}) holds true 
for the Hankel operator $\Gamma(h)$ corresponding to the sequence 
\begin{equation}
h(j)=\sum_{\ell=1}^L \zeta_\ell^j h_\ell(j), \quad \zeta_{\ell}\in\bbT.
\label{z15}
\end{equation}
\end{theorem}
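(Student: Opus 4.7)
The plan is to reduce the claim to Theorems \ref{thm.a1} and \ref{cr.a3} via a unitary similarity that absorbs each factor $\zeta_\ell^j$. For $\zeta\in\bbT$, let $D_\zeta$ be the diagonal operator on $\ell^2(\bbZ_+)$ with $(D_\zeta)_{kk}=\zeta^k$; since $|\zeta|=1$, $D_\zeta$ is unitary. The key elementary observation is that for any sequence $g$,
$$
\Gamma(\zeta^{j} g(j)) = D_\zeta\,\Gamma(g)\,D_\zeta,
$$
because the $(j,k)$-entry of the right-hand side equals $\zeta^{j}\,g(j+k)\,\zeta^{k}=\zeta^{j+k}g(j+k)$, which is exactly the $(j,k)$-entry of the Hankel matrix of $\zeta^{n}g(n)$. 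Consequently, $\Gamma(\zeta^{j}g(j))$ and $\Gamma(g)$ are unitarily equivalent, so their singular values coincide and their quasi-norms in $\Sch_{p,\infty}$ and $\Sch_{p,\infty}^{0}$ agree.

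Next, I would apply Theorem \ref{thm.a1} to each $h_\ell$ separately: by hypothesis, $\Gamma(h_\ell)\in\Sch_{p,\infty}$ with $p=1/\alpha$. The identity above then gives $\Gamma(\zeta_\ell^{j} h_\ell(j))\in\Sch_{p,\infty}$ with the same quasi-norm. Since $\Sch_{p,\infty}$ is a quasi-Banach ideal, the finite sum
$$
\Gamma(h) = \sum_{\ell=1}^{L}\Gamma(\zeta_\ell^{j}h_\ell(j))
$$
again lies in $\Sch_{p,\infty}$, which is precisely the estimate \eqref{ca6}. The little-$o$ assertion \eqref{ca6o} follows by the identical argument with $\Sch_{p,\infty}^{0}$ in place of $\Sch_{p,\infty}$, using that $\Sch_{p,\infty}^{0}$ is a closed linear subspace and therefore also stable under finite sums.

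The only genuinely delicate point — exactly the one flagged in the paragraph preceding the statement — is that one cannot simply apply Theorem \ref{thm.a1} directly to the sequence $\zeta^{j}h_\ell(j)$: its iterated differences $(\zeta^{j}h_\ell(j))^{(m)}$ involve a Leibniz-type combination of all lower-order differences of $h_\ell$ with mixed $\zeta$-weights, and so do not inherit the decay \eqref{ca5} with the same $m$-dependence. The unitary conjugation by $D_\zeta$ sidesteps this completely, by transporting the singular-value estimate rather than the symbol-level hypothesis. Apart from this observation, the argument is just the finite-sum stability of the ideals $\Sch_{p,\infty}$ and $\Sch_{p,\infty}^{0}$, so I expect the proof to be short.
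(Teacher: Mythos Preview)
Your proof is correct and follows essentially the same approach as the paper: the paper introduces the same diagonal unitary (denoted $U_\zeta$ rather than $D_\zeta$), proves the identity $\Hank(q_\zeta)=U_\zeta\Hank(h)U_\zeta$, and then uses that $\Sch_{p,\infty}$ and $\Sch_{p,\infty}^0$ are linear spaces to conclude. Your additional remark explaining why one cannot apply Theorem~\ref{thm.a1} directly to the twisted sequence is exactly the point the paper makes just before the statement.
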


\begin{proof}
For a sequence $h(j)$  and for $\zeta \in\bbT$, we denote by 
$q_{\zeta}$ the sequence $q_{\zeta}(j) = \zeta^j h (j)$. 
Let $U_\zeta$ be the unitary operator in $\ell^2 (\bbZ_{+})$ given by 
$$
(U_\zeta f)(j)=\zeta^j f(j), \quad j\geq0.
$$
By inspection we have
\begin{equation}
\Hank(q_{\zeta})=U_\zeta \Hank(h) U_\zeta
\label{z10d}
\end{equation}
and therefore 
$s_n(\Hank(q_{\zeta}))=s_n(\Hank(h))$
for all $n$. 

Since the classes $\Sch_{p,\infty}^0$ and   $\Sch_{p,\infty} $ are linear spaces,   estimates  \eqref{ca6} and  \eqref{ca6o} for
the operators $\Hank(h_\ell)$ extend to the sum
$$
\Hank(h)=\sum_{\ell=1}^L U_{\zeta_\ell}\Hank(h_\ell)U_{\zeta_\ell}.
$$
This concludes the proof.
\end{proof}

Of course, instead of a finite sum in \eqref{z15} one can 
consider infinite series or integrals.

%%%%%%%%%%%%%%%%%%%%%%%%%%%%%%%%%%%%%%%%%%
\subsection{Continuous representation}\label{sec.a1a}
%%%%%%%%%%%%%%%%%%%%%%%%%%%%%%%%%%%%%%%%%%

Now  the Hankel operator $\bHank(\bh)$ is defined by formula \eqref{eq:HH} in the space $L^2(\bbR_+)$. 

In the discrete representation, the spectral properties of $\Hank(h)$  are determined by the asymptotic
behaviour of the sequence $h(j)$ as $j\to\infty$. 
In the continuous representation, the behaviour of the kernel $\bh(t)$ for $t\to0$ 
and for $t\to\infty$ as well as the local singularities of $\bh$ 
contribute to the spectral properties of the Hankel operator $\bHank(\bh)$.
Therefore we impose some local smoothness conditions on $\bh(t)$, but
 our main   attention will be directed towards the behaviour of $\bh(t)$ as $t\to0$ and $t\to\infty$. 

Recall (see, e.g.,  \cite{Peller}) that the Carleman operator, corresponding to the kernel
$
\bh(t)=1/t,
$
is bounded.
From here, similarly to \eqref{a1h}, one easily obtains
\begin{equation}
\abs{\bh(t)}\leq C/t 
\quad \Rightarrow \quad
\bHank(\bh) \in \calB.
\label{a5b}
\end{equation}
In the continuous case, the 
Hilbert-Schmidt condition is given by  
\begin{equation}
\sum_{n=1}^\infty s_n(\bHank(\bh))^2=\int_0^\infty t\abs{\bh(t)}^2dt<\infty.
\label{z11}
\end{equation}
Of course, this condition is satisfied if  $\bh \in L^2_\loc(\bbR_+)$ and $\bh(t)=O(t^{-1} |\log t|^{-\alpha} )$ for some $\alpha> 1/2$ as $t\to0$ and for $t\to\infty$.

This suggests that one should consider kernels $\bh(t)$ that are logarithmically
``smaller'' than $1/t$ both for $t\to0$ and for $t\to\infty$. 
Indeed, 
the analogue of the estimate \eqref{z3} in the continuous case is 
\begin{equation}
\abs{\bh (t)}\leq A_0 t^{-1}\jap{\log t}^{-\alpha}, \quad t>0;
\label{cb5C}
\end{equation}
here and in what follows we use the notation $\jap{x}=(\abs{x}^2+1)^{1/2}$. We start with the ``continuous analogue" of  Theorem~\ref{thm.z1}. 

%%%%%%%%%%%%%%%%%%
\begin{theorem}\label{thm.z1C}
%%%%%%%%%%%%%%%%%%
Let     $\bh$ be a complex valued function in $L^\infty_\loc(\bbR_+)$ satisfying the 
estimate  \eqref{cb5C} with some $\alpha< 1/2$.
Then for  the singular values of the corresponding Hankel operator $\bHank(\bh)$ 
one has
\begin{equation}
s_n(\bHank(\bh))=O(n^{-\alpha}), \quad n\to\infty.
\label{z9}
\end{equation}
Moreover, $\norm{\bHank(\bh)}_{\Sch_{p,\infty}}\leq C(\alpha) A_{0}$ for some constant $ C(\alpha)$  and $ p=1/\alpha$.
\end{theorem}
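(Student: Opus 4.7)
The plan is to prove Theorem~\ref{thm.z1C} by the real interpolation method applied to the linear map $T : \bh \mapsto \bHank(\bh)$, exploiting the two endpoint estimates already noted in the introduction: the Carleman bound \eqref{a5b}, which gives $\norm{\bHank(\bh)}_{\calB} \leq \pi \sup_{t>0} t|\bh(t)|$, and the Hilbert--Schmidt identity \eqref{z11}. Together these say that $T$ is bounded from $L^\infty(\bbR_+, t\, dt)$ into $\calB$ and from $L^2(\bbR_+, t\, dt)$ into $\Sch_2$.

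To bring the weights into a standard form, I would perform the change of variable $u = \log t$ and set $\bg(u) := e^u \bh(e^u)$. A direct check gives $t|\bh(t)| = |\bg(u)|$ and $t|\bh(t)|^2\, dt = |\bg(u)|^2\, du$, so the weighted $L^q(\bbR_+, t\, dt)$ norm of $\bh$ coincides with the unweighted $L^q(\bbR, du)$ norm of $\bg$. The hypothesis \eqref{cb5C} becomes the transparent bound $|\bg(u)| \leq A_0 \jap{u}^{-\alpha}$ for all $u \in \bbR$, and a straightforward distribution-function calculation places $\jap{u}^{-\alpha}$ into the weak-type Lorentz space $L^{p,\infty}(\bbR, du)$ with $p = 1/\alpha$ and quasi-norm depending only on $\alpha$.

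Applying the real interpolation theorem with parameters $(\theta, \infty)$, $\theta = 2\alpha \in (0,1)$, one obtains that $T$ maps $(L^\infty(du), L^2(du))_{\theta,\infty} = L^{p,\infty}(\bbR, du)$ into $(\calB, \Sch_2)_{\theta,\infty} = \Sch_{p,\infty}$ with $p = 2/\theta = 1/\alpha$; both identifications are classical results on Lorentz spaces and Schatten ideals (see, e.g., \cite{BSbook}). Combined with the previous paragraph this delivers the inequality $\norm{\bHank(\bh)}_{\Sch_{p,\infty}} \leq C(\alpha) A_0$ and hence \eqref{z9}.

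If one prefers to avoid quoting abstract interpolation, the same argument can be run directly via the K-functional. Split $\bg = \bg_R^{\rm b} + \bg_R^{\rm s}$ with $\bg_R^{\rm b} := \bg \,\chi_{\{|u|>R\}}$ and $\bg_R^{\rm s} := \bg \,\chi_{\{|u|\leq R\}}$; then $\norm{\bg_R^{\rm b}}_{L^\infty} \leq A_0 R^{-\alpha}$, while $\norm{\bg_R^{\rm s}}_{L^2}^2 \leq A_0^2 \int_{-R}^{R} \jap{u}^{-2\alpha}du \leq C A_0^2 R^{1-2\alpha}$. The corresponding decomposition $\bHank(\bh) = A_R + B_R$ satisfies $\norm{A_R}_{\calB} \leq \pi A_0 R^{-\alpha}$ and $\norm{B_R}_{\Sch_2} \leq C A_0 R^{(1-2\alpha)/2}$, so the singular-value inequality $s_{2n}(A_R + B_R) \leq \norm{A_R}_{\calB} + n^{-1/2}\norm{B_R}_{\Sch_2}$ combined with the optimal choice $R = n$ yields $s_n(\bHank(\bh)) \leq C(\alpha) A_0 n^{-\alpha}$. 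The main obstacle — and the reason a different approach is needed when $\alpha \geq 1/2$ — is visible here: the hypothesis $\alpha < 1/2$ is used precisely to ensure that $\int_{-R}^{R}\jap{u}^{-2\alpha}du$ grows as a positive power of $R$, whereas at $\alpha = 1/2$ the integral gains only a logarithmic factor and the power-law optimization in $R$ collapses.
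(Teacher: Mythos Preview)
Your proof is correct and follows essentially the same route as the paper: real interpolation of the linear map $\bg \mapsto \bHank(\bh)$ (with $\bh(t)=t^{-1}\bg(\log t)$) between the Carleman bound $L^\infty\to\calB$ and the Hilbert--Schmidt identity $L^2\to\Sch_2$; the paper phrases this equivalently via the weighted measure $\bv(t)\,dt=t^{-1}dt$ on $\bbR_+$ applied to the function $t\bh(t)$, which is exactly your $\bg$ pulled back under $u=\log t$. One small slip in your write-up: the claim that the $L^q(\bbR_+,t\,dt)$-norm of $\bh$ equals the $L^q(\bbR,du)$-norm of $\bg$ is false for $q\neq2$ (for $q=\infty$ the former is $\operatorname{ess\,sup}|\bh|$, not $\sup t|\bh(t)|$), but your argument never actually relies on this identification and your explicit K-functional computation in the last paragraph is correct as stated.
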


For $\alpha\geq 1/2$, we also need additional conditions on the derivatives
$\bh^{(m)}(t)=(d/dt)^m \bh(t)$. 
The following result is the ``continuous analogue" of  Theorem~\ref{thm.a1}. 
It includes Theorem~\ref{thm.z1C} as a particular case.

%%%%%%%%%%%%%%%%%%
\begin{theorem}\label{thm.a2}
%%%%%%%%%%%%%%%%%%
Let $\alpha>0$ and  let $M=M(\alpha)$ be the integer given by \eqref{c5}.
Let $\bh$ be a complex valued function in $L^\infty_\loc(\bbR_+)$; 
if $\alpha\geq 1/2$, suppose also 
that $\bh\in C^M(\bbR_+)$.
Assume that  
\begin{equation}
\abs{\bh^{(m)}(t)}\leq A_m t^{-1-m}\jap{\log t}^{-\alpha}, \quad t>0,
\label{cb5}
\end{equation}
with some constants $A_0,\dots,A_M$
for all $m=0,\dots,M$.
Then the singular values of the corresponding Hankel operator $\bHank(\bh)$ satisfy
\eqref{z9}
and, for some constant $ C(\alpha)$,
$$
\norm{\bHank(\bh)}_{\Sch_{p,\infty}}
\leq
C(\alpha)
(A_0+\dots+A_M), \quad p=1/\alpha.
$$
\end{theorem}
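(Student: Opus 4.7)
The plan is to adapt the two-case strategy used for Theorem~\ref{thm.a1} to the continuous setting, the main technical substitution being integration by parts (more visual here, as noted in Section~\ref{sec.a1}) in place of the Abel summation.

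For $\alpha<1/2$ (so that $M=0$), I would argue directly by real interpolation between the two endpoint results highlighted in Section~\ref{sec.a1a}: namely, the Carleman-type boundedness \eqref{a5b}, which says $\abs{\bh(t)}\leq C/t\Rightarrow \bHank(\bh)\in\calB$, and the Hilbert--Schmidt criterion \eqref{z11}, which guarantees $\bHank(\bh)\in\Sch_2$ whenever $\abs{\bh(t)}\leq C\,t^{-1}\jap{\log t}^{-\beta}$ for some $\beta>1/2$. I would view the assignment $\bh\mapsto \bHank(\bh)$ as a linear map between two weighted $L^\infty$-type spaces on $\bbR_+$ and the operator ideals $\calB$ and $\Sch_2$, and then invoke the real interpolation property of the Lorentz operator ideals $\Sch_{p,\infty}$. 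Taking $\beta$ arbitrarily close to $1/2$ and interpolating with the appropriate parameter yields $\bHank(\bh)\in\Sch_{p,\infty}$ for every $p=1/\alpha>2$, together with the claimed quasi-norm bound in terms of $A_0$ alone. This reproduces Theorem~\ref{thm.z1C} in the continuous setting, with no use of the higher derivative bounds \eqref{cb5}.

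For $\alpha\geq 1/2$, I would invoke the continuous analogue of Peller's criterion (parallel to the one used in the proof of Theorem~\ref{thm.a1}) to reduce membership in $\Sch_{p,\infty}$ to showing that the symbol of $\bHank(\bh)$, a suitable Fourier--Laplace transform of $\bh$ on $\bbR$, lies in a Besov--Lorentz class $\mathfrak{B}^{1/p}_{p,\infty}(\bbR)$. I would decompose $\bbR_+$ multiplicatively into dyadic shells $\{2^{k}\leq t<2^{k+1}\}$ for $k\in\bbZ$ and, on each shell, estimate the relevant symbol oscillation by integrating by parts $M$ times. Each integration by parts transfers one derivative off the smooth Fourier kernel onto $\bh$, so after $M$ steps the resulting integrand is controlled by \eqref{cb5}; the $\jap{\log t}^{-\alpha}$ factor then turns the resulting shell estimate into a sequence indexed by $k$ whose decreasing rearrangement is dominated by $\{n^{-\alpha}\}_{n\geq1}$. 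Summing the dyadic contributions produces the desired Besov--Lorentz norm bound, linear in $A_0+\cdots+A_M$.

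The main obstacle will be the second step: pinning down the precise continuous Peller criterion and a workable characterisation of $\mathfrak{B}^{1/p}_{p,\infty}(\bbR)$, and then balancing the contributions from the singularity at $t=0$ against those from the decay at $t\to\infty$. Unlike the discrete setting, where only $j\to\infty$ matters, here both ends of $\bbR_+$ contribute on equal footing, which is exactly why \eqref{cb5} is stated symmetrically via $\jap{\log t}$, and any successful argument must respect that symmetry. A secondary delicate point is the borderline value $\alpha=1/2$: by \eqref{c5} we then have $M=1$, so one cannot invoke \eqref{z11} directly (the integral there diverges logarithmically at both ends), and the first-derivative bound is essential.
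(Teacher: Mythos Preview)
Your plan is essentially the paper's proof. For $\alpha\geq 1/2$ it matches exactly: the paper fixes a smooth dyadic partition $\bw_n(t)=\bw(t/2^n)$, $n\in\bbZ$, sets $\fbh_n(x)=\int_0^\infty \bh(t)\bw_n(t)e^{ixt}dt$, and invokes Peller's continuous Besov--Lorentz criterion (Theorem~\ref{Pe2C}); the key Lemma~\ref{lma.cb1} then bounds $\norm{\fbh_n}_{L^\infty}$ and $2^n\norm{\fbh_n}_{L^q}^q$ (for any $q\in(1/M,1/\alpha)$) by $M$-fold integration by parts, producing the $\jap{n}^{-\alpha}$ decay you anticipate. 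The two-sided index $n\in\bbZ$ is precisely what encodes the $t\to0$ and $t\to\infty$ symmetry you flag.

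One correction for $\alpha<1/2$: the interpolation is between \emph{fixed} endpoints, not between varying logarithmic exponents $\beta$. The paper sets $\bv(t)=1/t$, views $\bh/\bv\mapsto\bHank(\bh)$ as a linear map, and observes it is bounded $L^\infty\to\calB$ (Carleman, norm $\pi$) and $L^2_\bv\to\Sch_2$ (Hilbert--Schmidt identity \eqref{z11}). Real interpolation with $(L^2_\bv,L^\infty)_{\theta,\infty}=L^{p,\infty}_\bv$ and $(\Sch_2,\calB)_{\theta,\infty}=\Sch_{p,\infty}$, $\theta=1-2/p$, gives the result directly; one then checks that $\jap{\log t}^{-\alpha}\in L^{p,\infty}_\bv$. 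There is no limiting procedure in $\beta$, and the source spaces are $L^\infty$ and weighted $L^2$, not ``two weighted $L^\infty$-type spaces''.
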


%%%%%%%%%%%%%%%%%
\begin{theorem}\label{cr.a7}
%%%%%%%%%%%%%%%%%
In addition to the hypothesis of Theorem~$\ref{thm.a2}$, assume   that 
\begin{equation}
\abs{\bh^{(m)}(t)}=o( t^{-1-m}\jap{\log t}^{-\alpha}) \quad \text{ as $t\to0$ and as $t\to\infty$.}
\label{z10}
\end{equation}
Then 
\begin{equation}
s_n(\bHank(\bh))=o(n^{-\alpha}), \quad n\to\infty.
\label{z9C}
\end{equation}
\end{theorem}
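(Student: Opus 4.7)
The plan is to mirror the derivation of Theorem~\ref{cr.a3} from Theorem~\ref{thm.a1}: approximate $\bh$ by compactly supported smooth kernels, using the little-$o$ decay to make the ``tail'' small in $\Sch_{p,\infty}$ via Theorem~\ref{thm.a2}, and using compact support in $\bbR_+$ of the ``bulk'' to place it in $\Sch_{p,\infty}^0$.

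Fix $\eps>0$. By \eqref{z10} there exist $0<\delta<1<T<\infty$ such that
$$
|\bh^{(m)}(t)|\leq \eps\, t^{-1-m}\jap{\log t}^{-\alpha},\qquad t\in(0,2\delta]\cup[T/2,\infty),\quad m=0,\dots,M.
$$
Choose a smooth cutoff $\chi\in C^\infty(\bbR_+;[0,1])$ supported in $[\delta,T]$, equal to $1$ on $[2\delta,T/2]$, with scale-invariant bounds $|\chi^{(k)}(t)|\leq C_k t^{-k}$ for $k=1,\dots,M$ (for instance $\chi(t)=\varphi(\log t)$ for a fixed smooth bump $\varphi$). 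Decompose $\bh=\bh_{\rm tail}+\bh_{\rm c}$ with $\bh_{\rm tail}=(1-\chi)\bh$ and $\bh_{\rm c}=\chi\bh$.

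For $\bh_{\rm tail}$, the Leibniz rule together with the above decay of $\bh^{(m)}$ on $\supp(1-\chi)$ and the scale-invariant bounds on $\chi^{(k)}$ yields
$$
|\bh_{\rm tail}^{(m)}(t)|\leq C\eps\, t^{-1-m}\jap{\log t}^{-\alpha},\qquad t>0,\quad m=0,\dots,M,
$$
with $C$ independent of $\eps,\delta,T$; Theorem~\ref{thm.a2} then gives $\|\bHank(\bh_{\rm tail})\|_{\Sch_{p,\infty}}\leq C(\alpha)\eps$. For $\bh_{\rm c}$, which is $C^M$ and supported in $[\delta,T]\subset\bbR_+$, mollify to produce $\bh_{\rm c}^\eta\in C_c^\infty(\bbR_+)$ (supported in a slightly larger fixed compact subset of $\bbR_+$) with $\bh_{\rm c}^\eta\to\bh_{\rm c}$ in $C^M$ as $\eta\to 0$. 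Each $\bh_{\rm c}^\eta$, being smooth with compact support in $\bbR_+$, trivially satisfies the hypothesis of Theorem~\ref{thm.a2} for any $\alpha'>\alpha$, so $\bHank(\bh_{\rm c}^\eta)\in\Sch_{1/\alpha',\infty}\subset\Sch_{p,\infty}^0$. Applying Theorem~\ref{thm.a2} to the difference $\bh_{\rm c}-\bh_{\rm c}^\eta$ (whose $C^M$ norm on its fixed compact support tends to $0$) shows $\bHank(\bh_{\rm c}^\eta)\to\bHank(\bh_{\rm c})$ in $\Sch_{p,\infty}$, hence $\bHank(\bh_{\rm c})\in\Sch_{p,\infty}^0$ because $\Sch_{p,\infty}^0$ is closed. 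Combining via $s_{2n-1}(A+B)\leq s_n(A)+s_n(B)$ and the monotonicity of singular values gives $\limsup_n n^\alpha s_n(\bHank(\bh))\leq C'(\alpha)\eps$, and letting $\eps\to 0$ yields \eqref{z9C}.

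The main technical point is the Leibniz bookkeeping for $\bh_{\rm tail}$ on the transition regions $[\delta,2\delta]\cup[T/2,T]$: the cutoff derivatives $\chi^{(k)}$ scale like $\delta^{-k}$ respectively $T^{-k}$ there, and must pair with the lower-order derivatives $\bh^{(m-k)}$---of size $\eps\,\delta^{-1-(m-k)}|\log\delta|^{-\alpha}$ respectively $\eps\,T^{-1-(m-k)}|\log T|^{-\alpha}$ by the $o$-hypothesis---to reproduce the weight $t^{-1-m}\jap{\log t}^{-\alpha}$ (comparable to $\delta^{-1-m}|\log\delta|^{-\alpha}$ resp.\ $T^{-1-m}|\log T|^{-\alpha}$ on these regions) \emph{without} inflating the prefactor $\eps$. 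The scale-invariant cutoff profile $\chi(t)=\varphi(\log t)$ is precisely what makes the powers of $\delta$ and $T$ cancel in this step.
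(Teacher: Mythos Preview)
Your argument follows the same strategy as the paper's: split $\bh$ by a smooth cutoff, control the tail in $\Sch_{p,\infty}$ via Theorem~\ref{thm.a2} using the $o$-hypothesis, and show the compactly supported piece lies in $\Sch_{p,\infty}^0$. The Leibniz bookkeeping for the tail is exactly right.

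One simplification and one small gap. The mollification of $\bh_{\rm c}$ is unnecessary: the paper observes that the formula \eqref{c5} always permits a choice of $\beta>\alpha$ with $M(\beta)=M(\alpha)$, so a compactly supported $C^M$ (or merely $L^\infty$, when $\alpha<1/2$) kernel already satisfies the hypothesis of Theorem~\ref{thm.a2} with exponent $\beta$, whence $\bHank(\bh_{\rm c})\in\Sch_{1/\beta,\infty}\subset\Sch_{p,\infty}^0$ directly. This matters because your mollification step, as written, does not cover the case $\alpha<1/2$: there $M=0$ and the hypothesis gives only $\bh\in L^\infty_{\rm loc}$, so $\bh_{\rm c}$ need not be continuous and convolution mollifiers need not converge to it uniformly. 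Replacing the mollification by the $\beta$-trick (or, for $\alpha<1/2$, by the Hilbert--Schmidt inclusion $\Sch_2\subset\Sch_{p,\infty}^0$) closes this gap and shortens the proof.
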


Theorems~\ref{thm.z1C}, \ref{thm.a2} and \ref{cr.a7} are proven in Section~\ref{sec.cb}. 
Their proofs are similar to those in the discrete case. 
In particular, Theorem~\ref{thm.z1C} admits a
  direct   proof based on the real interpolation between  the 
Hilbert-Schmidt condition \eqref{z11}  and the  sufficient condition \eqref{a5b} for the boundedness of $\bHank(\bh)$. 
In the proof of 
Theorem~\ref{thm.a2}, we proceed from  the results of \cite{Peller}
which give necessary 
and sufficient conditions for  ${\bHank} ({\bf h})\in\Sch_{p }$ and hence  for  ${\bHank} ({\bf h})\in\Sch_{p,\infty}$. We prove that 
under the hypothesis of Theorem~\ref{thm.a2}, 
such conditions
are satisfied. Theorem~\ref{cr.a7} is deduced from Theorem~\ref{thm.a2} by simple approximation arguments.

\begin{remark}
\begin{enumerate}
\item
The exponent $\alpha$ in \eqref{z9} is optimal.  
Indeed,
let $\bh(t)$ be a sufficiently smooth real valued function such that for some $\alpha>0$
\begin{equation}
\bh(t)
=
t^{-1}| \log t |^{-\alpha} 
\label{z10a}
\end{equation}
for all sufficiently small $t$,  and $\bh(t)=0$  for all sufficiently large $t$. 
Then it follows from the results of \cite{II} that 
\begin{equation}
s_n(\bHank(\bh))=v(\alpha)   n^{-\alpha}+o(n^{-\alpha}), 
\quad n\to\infty,
\label{z10b}
\end{equation}
where the constant $v(\alpha)$ is given by \eqref{eq:V}.
Similarly, if \eqref{z10a} holds for all large $t$ and $\bh(t)=0$  for all small $t$, 
then again by the results of \cite{II} we obtain \eqref{z10b}. 

 \item
Some sufficient conditions for the estimate $s_n(\bHank(\bh))=O(n^{-\alpha})$ 
in terms of the smoothness  of $\bh$ were obtained in \cite{GLP}, see, e.g.,  Corollary~4.6 there.
These conditions require that $\bh(t)$ vanish very fast as $t\to\infty$ but allow for 
some singular behaviour as $t\to0$. 
These results are somewhat similar to Theorem~\ref{thm.a2}
but are less sharp. 
\end{enumerate}
\end{remark}

For a function $\bh(t)$ and for $a\in \bbR$, let us denote $\bq_a (t)= e^{i a t} \bh (t)$. 
If $\bh$ satisfies  \eqref{cb5} for some $m>0$, then $\bq_a$ does not necessarily 
satisfy the same condition. 
Nevertheless, similarly to the discrete case,  the following simple argument allows 
us to extend our results to $\bHank(\bh_a)$. 
Let the unitary  operator ${\bf U}_a$ in $L^2 ({\bbR}_{+})$ be defined   by the formula
$({\bf U}_a   \mathbf{f})(t)= e^{iat}  \mathbf{f}(t)$.  
The role of \eqref{z10d} is now played by  the identity 
$$
\bHank(\bq_a)=\bU_a \bHank(\bh) \bU_a.
$$
It follows that  the singular values of the operators $\bHank(\bq_a)$ and $\bHank(\bh)$ coincide. 
Reasoning as in the proof of Theorem~\ref{cr.a4}, we obtain the following generalization of Theorems~\ref{thm.a2}
and \ref{cr.a7}.

\begin{theorem}\label{cr.a4C}
Let the functions  $\bh_1, \bh_2,\ldots,\bh_L$  satisfy the hypothesis of Theorem~$\ref{thm.a2}$ 
(resp. of Theorem~$\ref{cr.a7}$), and 
let $a_\ell \in\bbR$, $\ell=1,\dots,L$. 
Then  for the Hankel operator $ {\bHank}({\bf h} )$ with the 
kernel
\begin{equation}
\bh(t)=\sum_{\ell=1}^L e^{ia_\ell t}\bh_\ell(t)
\label{z15C}
\end{equation}
the estimate  \eqref{z9} (resp.  \eqref{z9C}) holds true.
\end{theorem}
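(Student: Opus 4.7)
The plan is to adapt the proof of Theorem~\ref{cr.a4} almost verbatim, using the unitary operator $\bU_a$ (multiplication by $e^{iat}$) in place of the operator $U_\zeta$ from the discrete case. The strategy breaks into three steps: first, transfer the singular-value bounds from $\bh_\ell$ to its modulated version $e^{ia_\ell t}\bh_\ell$ via unitary equivalence; second, use that the ideals $\Sch_{p,\infty}$ and $\Sch_{p,\infty}^0$ are linear spaces; third, apply this to the finite sum \eqref{z15C}.

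For the first step I would verify the identity $\bHank(\bq_a)=\bU_a\bHank(\bh)\bU_a$ that was stated (but not proved) in the preceding paragraph. A direct computation gives, for $\bu\in L^2(\bbR_+)$,
$$
(\bU_a\bHank(\bh)\bU_a\bu)(t)=e^{iat}\int_0^\infty \bh(t+s)e^{ias}\bu(s)\,ds=\int_0^\infty e^{ia(t+s)}\bh(t+s)\bu(s)\,ds=(\bHank(\bq_a)\bu)(t).
$$
Since $\bU_a$ is unitary, this immediately yields $s_n(\bHank(\bq_a))=s_n(\bHank(\bh))$ for every $n\geq 1$, so that $\bHank(\bq_a)$ lies in $\Sch_{p,\infty}$ (or in $\Sch_{p,\infty}^0$) with the same quasi-norm as $\bHank(\bh)$.

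Next I would apply Theorem~\ref{thm.a2} (resp. Theorem~\ref{cr.a7}) to each $\bh_\ell$ to obtain $\bHank(\bh_\ell)\in\Sch_{p,\infty}$ (resp. $\Sch_{p,\infty}^0$), and then invoke the first step to conclude that each summand $\bHank(e^{ia_\ell t}\bh_\ell)=\bU_{a_\ell}\bHank(\bh_\ell)\bU_{a_\ell}$ belongs to the same class with an equivalent quasi-norm. Writing
$$
\bHank(\bh)=\sum_{\ell=1}^L\bU_{a_\ell}\bHank(\bh_\ell)\bU_{a_\ell}
$$
and using linearity of $\Sch_{p,\infty}$ and $\Sch_{p,\infty}^0$ would then deliver the desired conclusion \eqref{z9} (resp. \eqref{z9C}).

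There is no real obstacle in this argument: it is a mechanical transcription of the proof of Theorem~\ref{cr.a4} into the continuous setting. The only minor caveat is that $\norm{\cdot}_{\Sch_{p,\infty}}$ is a quasi-norm rather than a norm, so its triangle inequality carries a multiplicative constant; this is harmless for a sum of $L$ fixed terms and does not interfere with closedness of the ideals under finite linear combinations.
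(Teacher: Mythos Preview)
Your proposal is correct and follows exactly the same approach as the paper: the paper also uses the unitary conjugation identity $\bHank(\bq_a)=\bU_a\bHank(\bh)\bU_a$ to transfer singular-value bounds to the modulated kernels, and then appeals to the linearity of $\Sch_{p,\infty}$ and $\Sch_{p,\infty}^0$ exactly as in the proof of Theorem~\ref{cr.a4}. Your explicit verification of the conjugation identity is a welcome addition, since the paper merely states it.
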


Of course, instead of a finite sum in \eqref{z15C} one can 
consider infinite series or integrals.

Note that the results in the discrete and continuous cases are not quite independent. 
In principle, each one of them can be obtained from another   one  
through the Laguerre transform (see, e.g., the book \cite{Peller}) or by linking  
the symbols of the operators $\Gamma(h)$ and $ {\bHank}({\bf h})$ through a conformal map
from the unit disc onto the upper half-plane. 
However,  technically it is simpler    to carry out derivations in each case independently.

%%%%%%%%%%%%%%%%%%%%%%%%%%%%%%%%%%%%%%%%%%%%%%%%%
%%%%%%%%%%%%%%%%%%%%%%%%%%%%%%%%%%%%%%%%%%%%%%%%%
\section{Continuous representation}\label{sec.cb}
%%%%%%%%%%%%%%%%%%%%%%%%%%%%%%%%%%%%%%%%%%%%%%%%%
%%%%%%%%%%%%%%%%%%%%%%%%%%%%%%%%%%%%%%%%%%%%%%%%%

Recall that  the Hankel operator $\bHank(\bh)$ is defined by formula \eqref{eq:HH} in the space $L^2(\bbR_+)$. 
Here we prove Theorems~\ref{thm.z1C}, \ref{thm.a2} and \ref{cr.a7}.

%%%%%%%%%%%%%%%%%%%%%%%%%%%%%%%%%%%%%%%%%%
\subsection{The case $\alpha<1/2$}\label{sec.cb1}
%%%%%%%%%%%%%%%%%%%%%%%%%%%%%%%%%%%%%%%%%%
We will use weighted $L^p$ classes on $\bbR_+$ with the weight $\bv(t)=1/t$:
$$
\bg\in L^p_\bv(\bbR_+) 
\quad\Leftrightarrow\quad 
\norm{\bg}_{L^p_\bv}^p=\int_{0}^\infty \abs{\bg(t)}^p \bv(t)dt<\infty,
\quad
\bv(t)=1/t,
$$
and the corresponding weak class
\begin{equation}
\bg\in L^{p,\infty}_\bv(\bbR_+) 
\quad\Leftrightarrow\quad
\norm{\bg}_{L^{p,\infty}_\bv}^p
=
\sup_{s>0} s^p \int_{t: \abs{\bg(t)}>s} \bv(t)dt<\infty.
\label{eq:Lor}
\end{equation}
By definition, for $p=\infty$ the weighted class $L_\bv^\infty$ coincides
with the usual (unweighted) $L^\infty$ class. 

Below we will use the real interpolation method (the ``$K$-method''),
see, e.g. \cite[Section~3.1]{BL} for the details. 
A pair  $(X_0,X_1)$  of quasi-Banach spaces is called compatible,   if both $X_0$ and $X_1$
are continuously embedded into the same Hausdorff topological vector space. 
Real interpolation with the parameters $0<\theta<1$ and $1\leq q\leq \infty$
between a compatible pair of quasi-Banach spaces $(X_0,X_1)$ 
yields an intermediate quasi-Banach space $(X_0,X_1)_{\theta,q}$. In particular, we have
\begin{equation}
(L^2_\bv,L^\infty_\bv)_{\theta,\infty}=L^{p,\infty}_\bv,
\quad
(\Sch_2,\calB)_{\theta,\infty}=\Sch_{p,\infty}, 
\quad 
\theta=1- 2/{p}.
\label{eq:int}
\end{equation}
If $(X_0,X_1)$ and $(Y_0,Y_1)$ are two compatible pairs of quasi-Banach spaces
and if $T$ is a bounded linear map from $X_0$ to $Y_0$ and from $X_1$ to $Y_1$, then 
the real interpolation method ensures the boundedness of $T$ as a map 
from $(X_0,X_1)_{\theta,q}$ to $(Y_0,Y_1)_{\theta,q}$.

%%%%%%%%%%%%%%%%%%
\begin{lemma}\label{lma.cb2}
%%%%%%%%%%%%%%%%%%
Let $\bv(t)=1/t$, and let $\bh:\bbR_+\to \bbC$ be a measurable function such that $\bh/\bv\in L^{p,\infty}_\bv$ 
  with some $p>2$. 
Then $\bHank(\bh)\in \Sch_{p,\infty}$ and 
\begin{equation}
\norm{\bHank(\bh)}_{\Sch_{p,\infty}}\leq C_p\norm{\bh/\bv}_{L^{p,\infty}_\bv}.
\label{cb3a}
\end{equation}
\end{lemma}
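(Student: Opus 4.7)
The plan is to obtain Lemma \ref{lma.cb2} by real interpolation between the two endpoint cases mentioned just before the statement: the Hilbert--Schmidt condition \eqref{z11} and the bounded-operator criterion \eqref{a5b}. The key observation is that $\bh \mapsto \bHank(\bh)$ is linear, so it suffices to identify two endpoints at which the map is known to be bounded, and then invoke the interpolation identities \eqref{eq:int}.

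Concretely, I would work with the unknown $\bg = \bh/\bv$, i.e.\ $\bh(t) = \bg(t)/t$, and consider the linear map $T: \bg \mapsto \bHank(\bg/t)$. At the $L^2_\bv$ endpoint, the change of variable gives
\begin{equation*}
\int_0^\infty \abs{\bh(t)}^2\, t\, dt \;=\; \int_0^\infty \abs{\bg(t)}^2\, \frac{dt}{t} \;=\; \norm{\bg}_{L^2_\bv}^2,
\end{equation*}
so by \eqref{z11}, $T$ sends $L^2_\bv$ boundedly into $\Sch_2$ with norm one. At the $L^\infty_\bv = L^\infty$ endpoint, $\bg \in L^\infty$ means $\abs{\bh(t)} \leq \norm{\bg}_\infty / t$, and the Carleman-type estimate \eqref{a5b} yields $\norm{\bHank(\bh)} \leq C\norm{\bg}_{L^\infty_\bv}$. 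So $T$ is also bounded from $L^\infty_\bv$ into $\calB$.

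Both pairs $(L^2_\bv, L^\infty_\bv)$ and $(\Sch_2, \calB)$ are compatible in the sense recalled earlier: the first pair embeds into, say, $L^1_\loc(\bbR_+)$, and the second into $\calB$ trivially. Applying the real interpolation functor $(\cdot,\cdot)_{\theta,\infty}$ with $\theta = 1 - 2/p$ and using the two identifications in \eqref{eq:int}, $T$ extends to a bounded map from $L^{p,\infty}_\bv$ into $\Sch_{p,\infty}$, which is exactly the inequality \eqref{cb3a}.

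The only point that requires a bit of care is the interpolation identities \eqref{eq:int} themselves; these are however standard facts (the first is the Lorentz-space identification in a weighted setting, the second is the classical description of the Schatten weak ideals as real interpolates between $\Sch_2$ and $\calB$), so I would simply cite \cite{BL} and the analogous references for the operator side. I do not anticipate any substantial obstacle here; the whole argument is structural once the two endpoint estimates are matched with the correct weighted $L^p$ spaces.
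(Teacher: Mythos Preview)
Your proposal is correct and follows essentially the same argument as the paper: define the linear map $\bg=\bh/\bv\mapsto\bHank(\bh)$, verify its boundedness from $L^2_\bv$ to $\Sch_2$ via \eqref{z11} and from $L^\infty$ to $\calB$ via the Carleman bound, and then interpolate using \eqref{eq:int}. The paper's proof is identical in structure, only more terse.
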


\begin{proof}
The case $\bh=\bv$ corresponds to the Carleman operator, which has the norm $\pi$. 
From here we obtain that if $\bh/\bv\in L^\infty$, then $\bHank(\bh)\in \calB$, and 
$$
\norm{\bHank(\bh)}
\leq 
\pi \norm{\bh/\bv}_{L^\infty}
=
\pi \sup_{t>0} t\abs{\bh(t)}.
$$
On the other hand, we have the Hilbert-Schmidt relation \eqref{z11}. 
Thus, the linear map
\begin{equation}
\bh/\bv \mapsto  \bHank(\bh)
\label{ca4a}
\end{equation}
is bounded from $L^\infty_\bv=L^\infty$ to $\calB$ 
and from $L^2_\bv$ to $\Sch_2$. 
In view of \eqref{eq:int}, we see
 that the map \eqref{ca4a} is bounded from $L^{p,\infty}_\bv$ to $\Sch_{p,\infty}$, 
and the estimate \eqref{cb3a} holds true. 
\end{proof}

\begin{proof}[Proof of Theorem~$\ref{thm.z1C}$]
Since 
$\abs{\bh(t)/\bv(t)}\leq A_0\jap{\log t}^{-\alpha}$
and 
$$
\int_{A_0\jap{\log t}^{-\alpha}>s} \bv(t)dt
=
\int_{ \jap{\log t} <  (A_0 /s)^p} t^{-1}dt
\leq 
C A_0^p s^{-p},\quad s>0,
$$
it follows from definition \eqref{eq:Lor} that $\bh/\bv\in  L^{p,\infty}_\bv$. 
So it remains to use Lemma~\ref{lma.cb2}.
\end{proof}
As a by-product of the above argument, we also obtain

%%%%%%%%%%%%%%%%%%
\begin{theorem}\label{thm.cb3}
%%%%%%%%%%%%%%%%%%
For all $p\geq2$, one has
\begin{equation}
\norm{\bHank(\bh)}_{\Sch_p}^p
\leq
C_p
\int_0^\infty t^{p-1}\abs{\bh(t)}^p dt.
\label{eq:ci}
\end{equation}
\end{theorem}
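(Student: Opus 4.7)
The plan is to reuse the two endpoint estimates established in the proof of Lemma~\ref{lma.cb2}, but interpolate with second parameter $q=p$ in place of $q=\infty$. Recall from that proof that the linear map $\bh/\bv \mapsto \bHank(\bh)$ is bounded from $L^\infty_\bv=L^\infty$ to $\calB$ (with norm at most $\pi$) and from $L^2_\bv$ to $\Sch_2$ (with norm at most $1$, by \eqref{z11}).

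Next I would invoke the real interpolation identities
$$
(L^2_\bv,L^\infty_\bv)_{\theta,p}=L^p_\bv,
\qquad
(\Sch_2,\calB)_{\theta,p}=\Sch_p,
\qquad \theta=1-2/p,
$$
which are the $q=p$ analogues of the $q=\infty$ identities \eqref{eq:int} already used in the paper. They are standard: real interpolation between $L^{p_0}$ (resp.\ $\Sch_{p_0}$) and $L^{p_1}$ (resp.\ $\Sch_{p_1}$) with second index $q$ produces the Lorentz space $L^{p,q}$ (resp.\ the Lorentz--Schatten class $\Sch_{p,q}$), and $L^{p,p}=L^p$, $\Sch_{p,p}=\Sch_p$; for the weighted variant one simply views $L^p_\bv$ as the unweighted $L^p$ with respect to the measure $\bv(t)\,dt$. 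Applying the interpolation theorem to the two endpoint estimates then gives
$$
\norm{\bHank(\bh)}_{\Sch_p}\leq C_p\, \norm{\bh/\bv}_{L^p_\bv}.
$$

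Raising to the $p$-th power and evaluating the weighted norm,
$$
\norm{\bh/\bv}_{L^p_\bv}^p=\int_0^\infty \abs{t\,\bh(t)}^p\,\frac{dt}{t}=\int_0^\infty t^{p-1}\abs{\bh(t)}^p\,dt,
$$
yields the required bound \eqref{eq:ci}. The only mildly delicate point is the Lorentz--Schatten identification $(\Sch_2,\calB)_{\theta,p}=\Sch_p$, which is standard but goes slightly beyond the $q=\infty$ case explicitly recorded in Section~\ref{sec.cb1}; I would briefly cite \cite[Section 3.1]{BL} (and the corresponding description of real interpolation on singular-value sequences) to justify it, with the weighted $L^p$ identity following along the same lines.
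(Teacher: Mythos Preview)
Your proof is correct and follows essentially the same route as the paper: real interpolation of the map $\bh/\bv\mapsto\bHank(\bh)$ between the endpoints $L^2_\bv\to\Sch_2$ and $L^\infty\to\calB$, now with second parameter $q=p$ instead of $q=\infty$, giving $(L^2_\bv,L^\infty_\bv)_{\theta,p}=L^p_\bv$ and $(\Sch_2,\calB)_{\theta,p}=\Sch_p$. The paper also notes that complex interpolation yields the explicit constant $C_p=\pi^{p-2}$.
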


\begin{proof}
Let us choose the interpolation parameter $ q= {p}$ and use that
$$
(L^2_\bv,L^\infty_\bv)_{\theta,p}=L^{p,p}_\bv=L^{p}_\bv,
\quad
(\Sch_2,\calB)_{\theta,p}=\Sch_{p,p}=\Sch_{p}, 
\quad 
\theta=1- 2/{p}.
$$
Then considering again the mapping \eqref{ca4a},   we see that
$$
\norm{\bHank(\bh)}_{\Sch_p}^p
\leq
C_p\norm{\bh/\bv}_{L^p_\bv}^p
=
C_p
\int_0^\infty t^{p-1}\abs{\bh(t)}^p dt,
$$
as required.
\end{proof}

Theorem~\ref{thm.cb3} can also be
proven by the complex interpolation method which shows that \eqref{eq:ci} holds
with $C_p =\pi^{p-2}$.

%%%%%%%%%%%%%%%%%%%%%%%%%%%%%%%%%%%%%%%%%%
\subsection{The case $\alpha\geq 1/2$}\label{sec.cb2}
%%%%%%%%%%%%%%%%%%%%%%%%%%%%%%%%%%%%%%%%%%

Let $\bw\in C_0^\infty(\bbR_+)$ be a function with the properties $\bw\geq0$, 
$\supp \bw=[1/2,2]$ and 
\begin{equation}
\sum_{n\in\bbZ} \bw(t/2^n)=1, \quad \forall t>0.
\label{cb7b}
\end{equation}
For $n\in\bbZ$, let $\bw_n(t)=\bw(t/2^n)$.
For a function $\bh\in L^1_\loc(\bbR_+)$ and for $n\in\bbZ$, set 
\begin{equation}
\fbh_n(x):=\int_0^\infty \bh(t)\bw_n(t)e^{ixt}dt, \quad x\in\bbR. 
\label{cb7a}
\end{equation}

%%%%%%%%%%%%%%%%%%%%
\begin{theorem}\label{Pe1C}\cite[Theorem~6.7.4]{Peller}
%%%%%%%%%%%%%%%%%%%%
Let $\bh\in L^1_{\rm loc} (\bbR_{+})$.
The estimate 
\begin{equation}
\norm{\bHank(\bh)}_{\Sch_p}^p
\leq
C_p
\sum_{n\in\bbZ} 2^n \int_{-\infty}^\infty\abs{\fbh_n(x)}^p dx
\label{cb8}
\end{equation}
holds,
so that $\bHank(\bh)\in \Sch_p$ if  the r.h.s.  in \eqref{cb8} is finite.
\end{theorem}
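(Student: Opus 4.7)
The plan is a Littlewood--Paley decomposition on the spatial (kernel) side, followed by a rescaling to unit scale on each dyadic piece, a base $\Sch_p$ estimate for kernels of bounded support, and a final summation by almost-orthogonality.

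First I would decompose $\bh = \sum_n \bh_n$, where $\bh_n(t) := \bh(t) \bw_n(t)$ is supported in $[2^{n-1}, 2^{n+1}]$, so that $\bHank(\bh) = \sum_n \bHank(\bh_n)$. Next, using the unitary dilation $(D_\lambda \mathbf{f})(t) = \sqrt{\lambda}\,\mathbf{f}(\lambda t)$ on $L^2(\bbR_{+})$, a direct computation gives the covariance
\begin{equation*}
D_\lambda \bHank(\bh) D_\lambda^{-1} = \bHank(\bh^{(\lambda)}), \qquad \bh^{(\lambda)}(t) = \lambda \bh(\lambda t).
\end{equation*}
Taking $\lambda = 2^n$ and setting $\tilde\bh_n(t) := 2^n \bh_n(2^n t)$, which is supported in the fixed interval $[1/2, 2]$, we get the scale-invariance $\|\bHank(\bh_n)\|_{\Sch_p} = \|\bHank(\tilde\bh_n)\|_{\Sch_p}$.

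The technical heart of the argument is then the base estimate: for any $\bg$ supported in $[1/2, 2]$,
\begin{equation*}
\|\bHank(\bg)\|_{\Sch_p}^p \leq C_p \int_{\bbR} |\hat\bg(x)|^p\,dx.
\end{equation*}
The idea here is that, extending $\bg$ by zero to $\bbR$, the function $\hat\bg$ plays the role of the symbol of $\bHank(\bg)$, and Peller's Besov characterization $\bHank(\bg) \in \Sch_p \Leftrightarrow \hat\bg \in B^{1/p}_{pp}(\bbR)$ applies. Because $\bg$ is localised on a single spatial dyadic scale, the dyadic frequency blocks of $\hat\bg$ all behave comparably and the $B^{1/p}_{pp}$ norm is dominated by the $L^p$ norm. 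Combining this with the scaling relation $\hat{\tilde\bh_n}(x) = \fbh_n(2^{-n}x)$, which gives $\int |\hat{\tilde\bh_n}|^p\,dx = 2^n \int |\fbh_n|^p\,dy$, we obtain
\begin{equation*}
\|\bHank(\bh_n)\|_{\Sch_p}^p \leq C_p\, 2^n \int_{\bbR} |\fbh_n(y)|^p\,dy.
\end{equation*}

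The final step is to sum these bounds. For $0 < p \leq 1$ this is immediate from the $p$-subadditivity of $\|\cdot\|_{\Sch_p}^p$. For $p > 1$ the quasi-triangle inequality is inadequate, and this is the main obstacle. The remedy is an almost-orthogonality argument exploiting the fact that the supports $[2^{n-1}, 2^{n+1}]$ overlap only for consecutive indices: one splits the sum into a finite number of subseries whose pieces have genuinely disjoint kernel supports, applies orthogonality within each subseries (together with Khintchine/Rademacher-type averaging over random signs $\pm\bHank(\bh_n)$), and absorbs a $p$-dependent constant. Assembling everything yields the claimed bound \eqref{cb8}.
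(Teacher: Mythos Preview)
The paper does not prove this theorem; it is quoted from \cite[Theorem~6.7.4]{Peller} and used as a black box. So there is no in-paper argument to compare against, and your proposal has to stand on its own.

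Your overall scheme --- dyadic decomposition of the kernel, rescaling each piece to unit scale via dilation covariance, a base $\Sch_p$ estimate for kernels supported in $[1/2,2]$, then summation --- is indeed the skeleton of Peller's own argument. The fatal gap is in your justification of the base estimate
\[
\|\bHank(\bg)\|_{\Sch_p}^p \leq C_p \int_{\bbR} |\hat\bg(x)|^p\,dx, \qquad \supp\bg\subset[1/2,2].
\]
You derive it by invoking ``Peller's Besov characterization $\bHank(\bg)\in\Sch_p \Leftrightarrow \hat\bg\in B^{1/p}_{pp}(\bbR)$'' and then noting that for spatially localized $\bg$ the Besov norm of $\hat\bg$ collapses to its $L^p$ norm. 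But inequality \eqref{cb8} \emph{is} the forward direction of that Besov characterization: the partition $\{\bw_n\}$ is a Littlewood--Paley decomposition on the Fourier side of the symbol $\hat\bh$, and the right-hand side of \eqref{cb8} is exactly $\|\hat\bh\|_{B^{1/p}_{pp}}^p$. You are invoking the theorem to prove itself. Your Besov-versus-$L^p$ observation only shows that the single-scale estimate is \emph{equivalent} to the full statement, not that either one holds. In Peller's actual proof this base estimate is where all the work lies: for $p\geq1$ it is obtained by a direct argument (reduction to $p=1$ and a trace-class bound via Hankel--Schur multipliers and averaging projections), and for $0<p<1$ by a considerably more delicate construction. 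None of this is supplied in your sketch.

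A secondary issue: for $p>1$ the summation step is not as clean as you suggest. Disjointness of the supports of the $\bh_n$ does not yield orthogonality of ranges or coranges of the $\bHank(\bh_n)$ --- their integral kernels live on diagonal strips $\{t+s\in\supp\bh_n\}$, which all meet near the origin --- so the phrase ``apply orthogonality within each subseries'' hides a genuine argument. The Rademacher averaging you allude to would require, at minimum, a uniform $\Sch_p$ bound on $\sum_n \epsilon_n \bHank(\bh_n)$ over all sign choices together with cotype information for $\Sch_p$, neither of which you provide.
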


The convergence of the series in \eqref{cb8} means that the symbol of the operator $\bHank(\bh)$ 
belongs to the Besov class $B^{1/p}_{pp} (\bbR)$.
Further, we have

%%%%%%%%%%%%%%%%%%%%
\begin{theorem}\label{Pe2C} 
%%%%%%%%%%%%%%%%%%%%
Let $\bh\in L^1_{\rm loc} (\bbR_{+})$. Suppose that
\begin{equation}
\babs{\bh}_p^p
:=
\sup_{s>0}s^p\sum_{n\in\bbZ} 2^n
\abs{\{x\in\bbR: \abs{\fbh_n(x)}>s\}} <\infty.
\label{cb9}
\end{equation}
Then $\bHank(\bh) \in \Sch_{p,\infty}$ and
$$
\norm{\bHank(\bh)}_{\Sch_{p,\infty}}
\leq
C_p
\babs{\bh}_p. 
$$
 \end{theorem}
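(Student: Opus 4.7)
The plan is to deduce Theorem~\ref{Pe2C} from Theorem~\ref{Pe1C} by invoking the real interpolation machinery already reviewed in Section~\ref{sec.cb1}. The key observation is that the condition \eqref{cb9} is a Lorentz-type quasi-norm on the sequence $F_\bh:=(\fbh_n)_{n\in\bbZ}$, viewed as a function on the product measure space $(\bbZ\times\bbR,\mu)$ with $d\mu=\sum_{n\in\bbZ}2^n\delta_n\otimes dx$. In this language, the right-hand side of \eqref{cb8} equals $\|F_\bh\|^p_{L^p(\mu)}$, while \eqref{cb9} equals $\|F_\bh\|^p_{L^{p,\infty}(\mu)}$.

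First I would reinterpret Theorem~\ref{Pe1C} as the statement that the linear map $T\colon\bh\mapsto\bHank(\bh)$ is bounded from the source space $X_q:=\{\bh\in L^1_\loc(\bbR_+):F_\bh\in L^q(\mu)\}$, endowed with the quasi-norm $\bh\mapsto\|F_\bh\|_{L^q(\mu)}$, to $\Sch_q$ for every admissible $q>0$. I would then pick exponents $p_0,p_1$ with $0<p_0<p<p_1<\infty$ to which Theorem~\ref{Pe1C} applies, and run the $K$-method with parameters $\theta\in(0,1)$ satisfying $1/p=(1-\theta)/p_0+\theta/p_1$ and second index $\infty$. The standard interpolation identities, extending \eqref{eq:int} to $\mu$, give
\begin{equation*}
(L^{p_0}(\mu),L^{p_1}(\mu))_{\theta,\infty}=L^{p,\infty}(\mu),\qquad
(\Sch_{p_0},\Sch_{p_1})_{\theta,\infty}=\Sch_{p,\infty}.
\end{equation*}
Real interpolation then promotes $T$ to a bounded map from $\{\bh:F_\bh\in L^{p,\infty}(\mu)\}$ into $\Sch_{p,\infty}$, with operator norm controlled by $C_{p_0}^{1-\theta}C_{p_1}^{\theta}$ times a $K$-method factor. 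Since $\|F_\bh\|_{L^{p,\infty}(\mu)}=\babs{\bh}_p$ by \eqref{cb9}, this is exactly the asserted bound.

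The main technical points will be: \emph{(i)} verifying that $(X_{p_0},X_{p_1})$ forms a compatible pair of quasi-Banach spaces, which is immediate since both embed continuously into $L^1_\loc(\bbR_+)$ via inversion of the dyadic Fourier decomposition \eqref{cb7a}--\eqref{cb7b}; \emph{(ii)} citing the Lorentz interpolation identity over the $\sigma$-finite measure $\mu$, a standard fact from \cite[Chapter~5]{BL}; and \emph{(iii)} ensuring Theorem~\ref{Pe1C} is in fact available at a pair $p_0<p<p_1$ straddling the given exponent, which is implicit in Peller's formulation. The whole scheme parallels the original argument of Peller in the discrete case treated in \cite{Peller}; I expect step \emph{(iii)} and the housekeeping surrounding the quasi-Banach range $p<1$ to be the only points requiring genuine care.
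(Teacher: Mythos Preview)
Your proposal is correct and follows essentially the same route as the paper: choose $p_0<p<p_1$, apply Theorem~\ref{Pe1C} at both endpoints, and interpolate by the $K$-method with second index $\infty$ to land in $\Sch_{p,\infty}$; the paper's sketch is identical and, like you, defers the details to Peller's discrete argument. The one point worth naming explicitly is that identifying $(X_{p_0},X_{p_1})_{\theta,\infty}$ with $\{\bh:F_\bh\in L^{p,\infty}(\mu)\}$ is precisely the \emph{retract} step (cf.\ \cite[Section~6.4]{BL}), which the paper invokes by name when stating the discrete analogue (Theorem~\ref{Pe2D}); your points (i)--(ii) gesture at this but the compatibility of the pair is not the issue---the nontrivial content is that the map $\bh\mapsto F_\bh$ admits a bounded left inverse on each $L^q(\mu)$.
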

 
 In the discrete case (see  
 Theorem~\ref{Pe2D} below), 
 this theorem  is proven in \cite[Theorem 6.4.4]{Peller}. 
In  the continuous case,  the proof   is exactly the same, up to   trivial changes in notation. For a given $p$ one chooses some $p_{0}$ and $p_1$ such that $p_{0}< p < p_1$ and uses estimates \eqref{cb8} with $p=p_0$ and $p=p_1$. Then one applies the real interpolation method to these estimates choosing
 the interpolation parameters $\theta$, $q$  such that $ 1/p= (1-\theta)/p_0+ {\theta}/{p_0}$
 and $q=\infty$. 
 
 The results of \cite{Peller} also show that if $\bHank(\bh)\in \Sch_p$ (resp.  if $\bHank(\bh) \in \Sch_{p,\infty}$), 
 then the r.h.s. of    \eqref{cb8} (resp. of \eqref{cb9}) is  necessary finite, although we will not need these facts.

  Our goal is to check that under the assumptions  of Theorem~$\ref{thm.a2}$ the expression  \eqref{cb9} is finite.

%%%%%%%%%%%%%%%%%%
\begin{lemma}\label{lma.cb1}
%%%%%%%%%%%%%%%%%%
Assume the hypothesis of Theorem~$\ref{thm.a2}$.  
Then for any $q>1/M$  and  for all $n\in\bbZ$ the functions \eqref{cb7a} satisfy the estimates
\begin{align}
\norm{\fbh_n}_{L^\infty}
&\leq 
\int_{2^{n-1}}^{2^{n+1}} \abs{\bh(t)}dt,
\label{cb12}
\\
2^n \norm{\fbh_n}_{L^q}^q
&\leq 
C_q\biggl(\sum_{m=0}^M \int_{2^{n-1}}^{2^{n+1}} t^m \abs{\bh^{(m)}(t)}dt\biggr)^q
\label{cb13}
\end{align}
with a constant $C_q$ independent of $n$.
\end{lemma}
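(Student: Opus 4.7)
The plan is to establish the two estimates separately, with \eqref{cb12} being immediate from the definition and \eqref{cb13} requiring integration by parts combined with a judicious splitting of the $L^q$ integral.

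For \eqref{cb12}, I would simply note that $\supp \bw_n = [2^{n-1}, 2^{n+1}]$ and (as a consequence of the partition-of-unity property \eqref{cb7b} together with $\bw \geq 0$ and the fact that at most two translates overlap at any point) $\bw_n(t) \leq 1$. Then
$$\abs{\fbh_n(x)} \leq \int_0^\infty \abs{\bh(t)} \bw_n(t) \, dt \leq \int_{2^{n-1}}^{2^{n+1}} \abs{\bh(t)} \, dt$$
uniformly in $x$, which is \eqref{cb12}.

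For \eqref{cb13}, the key step is to establish the pointwise decay bound
$$\abs{\fbh_n(x)} \leq C_M (2^n \abs{x})^{-M} \sum_{m=0}^M \int_{2^{n-1}}^{2^{n+1}} t^m \abs{\bh^{(m)}(t)}\, dt =: (2^n\abs{x})^{-M} C_M J_n$$
for $x \neq 0$. Since $\bh \in C^M(\bbR_+)$ and $\bw_n$ has compact support in $(0,\infty)$, integrating by parts $M$ times against $e^{ixt}$ yields
$$\fbh_n(x) = (-ix)^{-M} \int_0^\infty \frac{d^M}{dt^M}\bigl[\bh(t)\bw_n(t)\bigr] e^{ixt}\, dt$$
with no boundary terms. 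Expanding by the Leibniz rule and using $\bw_n^{(j)}(t) = 2^{-nj}\bw^{(j)}(t/2^n)$, so that $\abs{\bw_n^{(j)}(t)} \leq C_j 2^{-nj}$, together with $t \sim 2^n$ on $\supp \bw_n$, I obtain
$$\Abs{\tfrac{d^M}{dt^M}[\bh(t)\bw_n(t)]} \leq C_M \sum_{m=0}^M t^{m-M} \abs{\bh^{(m)}(t)} \mathbbm{1}_{[2^{n-1},2^{n+1}]}(t),$$
which after integration gives the claimed pointwise decay (using $t^{m-M} \leq C 2^{n(m-M)}$ and $2^{n m} \leq C t^m$ on the dyadic interval).

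To finish, I would combine the trivial bound $\abs{\fbh_n(x)} \leq J_n$ from \eqref{cb12} with the decay bound $\abs{\fbh_n(x)} \leq C_M (2^n\abs{x})^{-M} J_n$ and split the integral at $\abs{x} = 2^{-n}$:
$$\norm{\fbh_n}_{L^q}^q \leq J_n^q \int_{\abs{x} \leq 2^{-n}} dx + C_M^q J_n^q \int_{\abs{x} > 2^{-n}} (2^n\abs{x})^{-Mq}\, dx.$$
The first integral equals $2 \cdot 2^{-n}$, and the second, provided $Mq > 1$, equals $\frac{2}{Mq - 1}\cdot 2^{-n}$ by a direct computation, so $2^n\norm{\fbh_n}_{L^q}^q \leq C_q J_n^q$ as desired. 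The hypothesis $q > 1/M$ is exactly what is needed for the tail integral to converge, and the main bookkeeping obstacle is keeping track of the powers of $2^n$ and $t$ in the Leibniz expansion so that they assemble into the weighted integrals $\int t^m \abs{\bh^{(m)}(t)}\, dt$ appearing on the right-hand side.
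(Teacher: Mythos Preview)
Your proof is correct and follows essentially the same approach as the paper: the $L^\infty$ bound is immediate from the support and size of $\bw_n$, and the $L^q$ bound comes from splitting the integral at $\abs{x}=2^{-n}$, using the $L^\infty$ estimate near the origin and $M$-fold integration by parts (with the Leibniz rule and the scaling $\abs{\bw_n^{(k)}}\leq C_k 2^{-nk}$) for the tail, the condition $q>1/M$ ensuring convergence of the latter. The only cosmetic difference is that you derive the pointwise decay bound before splitting, while the paper splits first and then integrates by parts on the tail piece.
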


\begin{proof}
The first bound is a direct consequence of the definition \eqref{cb7a}
of $\fbh_n$ and of the properties $0\leq \bw_n\leq1$ and $\supp \bw_n=[2^{n-1},2^{n+1}]$.
In order to obtain the second bound, we write 
\begin{equation}
2^n \norm{\fbh_n}_{L^q}^q=
2^n \int_{\abs{x}\leq 2^{-n}}\abs{\fbh_n(x)}^qdx
+
2^n \int_{\abs{x}\geq 2^{-n}} \abs{\fbh_n(x)}^q dx
\label{cb14}
\end{equation}
and estimate the two terms in the r.h.s. separately. 
For the first term, we use \eqref{cb12}:
\begin{equation}
2^n \int_{\abs{x}\leq 2^{-n}}\abs{\fbh_n(x)}^qdx
\leq
2\norm{\fbh_n}^q_{L^\infty}
\leq
2
\biggl(
\int_{2^{n-1}}^{2^{n+1}} \abs{\bh(t)}dt
\biggr)^q.
\label{cb15}
\end{equation}
In order to estimate the second term in the r.h.s. of  \eqref{cb14}, we integrate 
by parts $M$ times in the definition \eqref{cb7a} of $\fbh_n$: 
\begin{multline}
\fbh_n(x)
=
(ix)^{-M}\int_0^\infty \bh(t) \bw_n(t) (d/dt)^M e^{ixt}dt
\\
=
(-ix)^{-M}\int_0^\infty(\bh(t)\bw_n(t))^{(M)}e^{ixt}dt.
\label{cb16}
\end{multline}
Since
\begin{equation}
\abs{\bw_n^{(k)}(t)}
=
2^{-nk}\abs{\bw^{(k)}(t/2^n)}
\leq
C_k 2^{-nk}, 
\quad k\geq0,
\quad n\in\bbZ,
\label{eq:dw}
\end{equation}
we get
\begin{multline}
\Abs{\int_0^\infty(\bh(t)\bw_n(t))^{(M)}e^{ixt}dt}
\leq
C_M \sum_{m=0}^M 
2^{-n(M-m)}\int_{2^{n-1}}^{2^{n+1}} \abs{\bh^{(m)}(t)}dt
\\
\leq
2^M
C_M 2^{-nM}
\sum_{m=0}^M 
\int_{2^{n-1}}^{2^{n+1}} t^m \abs{\bh^{(m)}(t)}dt.
\label{cb18}
\end{multline}
Combining \eqref{cb16} and \eqref{cb18}, we see  that
$$
  \abs{\fbh_n(x)} 
\leq
  C_M' \abs{x}^{-M} 2^{-nM }\sum_{m=0}^M \int_{2^{n-1}}^{2^{n+1}} t^m \abs{\bh^{(m)}(t)}dt 
$$
whence
\begin{multline*}
2^n \int_{\abs{x}\geq 2^{-n}} \abs{\fbh_n(x)}^q dx
\\
\leq C_M''
\biggl(
2^{n-nMq}\int_{\abs{x}\geq 2^{-n}} \abs{x}^{-Mq} dx
\biggr)
\biggl(  \sum_{m=0}^M \int_{2^{n-1}}^{2^{n+1}} t^m \abs{\bh^{(m)}(t)}dt\biggr)^q.
\end{multline*}
Since $Mq>1$, 
the first factor here equals $2/(Mq-1)$. 
Putting 
together the last estimate with \eqref{cb15} and using \eqref{cb14}, we get \eqref{cb13}. 
\end{proof}

\begin{proof}[Proof of Theorem~$\ref{thm.a2}$ for $\alpha \geq 1/2$]
Under  assumption \eqref{cb5} for all $m=0,\ldots, M$ we have
\begin{multline*}
\int_{2^{n-1}}^{2^{n+1}} t^m \abs{\bh^{(m)}(t)}dt \leq A_{m} 
\int_{2^{n-1}}^{2^{n+1}} t^{-1} \jap{\log t}^{-\alpha}dt 
\\  
=A_{m} 
\int_{ n-1}^{n+1}   \jap{x}^{-\alpha}dx \leq cA_{m}   \jap{n}^{-\alpha};
\end{multline*}
here we assume that $\log$ is the base 2 logarithm, $\log=\log_2$. 
Fix some $q\in(M^{-1}, \alpha^{-1})$; then   
it follows from \eqref{cb12}, \eqref{cb13} that
\begin{align}
\norm{\fbh_n}_{L^\infty}&\leq C A_0\jap{n}^{- \alpha}, \quad n\in \bbZ,
\label{cb19}
\\
2^n \norm{\fbh_n}_{L^q}^q&\leq C \bA^q \jap{n}^{-\alpha q}, \quad n\in \bbZ,
\label{cb20}
\end{align}
with some constant $C$ and $\bA=A_0+\dots+A_M$.  

Let us now estimate the functional 
$\babs{\bh}_p$ in \eqref{cb9}.  It follows from \eqref{cb19} that, for every $s>0$ and all $n\in\bbZ$ such that
\begin{equation}
\jap{n}> (C A_{0})^p s^{-p}=: N(s),
\label{cb21}
\end{equation}
   the inequality $\norm{\fbh_n}_{L^\infty}<s$ holds. Therefore 
 \begin{equation}
s^p\sum_{n\in\bbZ} 2^n 
\abs{\{x\in\bbR: \abs{\fbh_n(x)}>s\} } 
=
s^p\sum_{\jap{n}\leq N(s)} 2^n 
\abs{\{x\in\bbR: \abs{\fbh_n(x)}>s\} }.
\label{eq:cb20}
\end{equation}
Using  the obvious inequality
$$
s^q \abs{\{x\in\bbR: \abs{\fbh_n(x)}>s\} }
\leq
\norm{\fbh_n}_{L^q}^q 
$$
and the  bound \eqref{cb20}, we can   estimate  the  
expression \eqref{eq:cb20} by
$$
s^{p-q}
\sum_{\jap{n}\leq N (s)}2^n \norm{\fbh_n}_{L^q}^q 
\leq
s^{p-q}C \bA^q
\sum_{\jap{n}\leq N (s)} \jap{n}^{-\alpha q}
\leq
s^{p-q}C' \bA^q
 N(s)^{1-\alpha q}
$$
(we have taken into account here that $\alpha q<1$).
By virtue of \eqref{cb21} this expression is bounded by $C'' \bA^q$ with a constant $C''$ that does not depend on $s$.
Therefore it follows    from \eqref{cb20}  that $\babs{h}_p^p\leq C''\bA^p$.
In view Theorem~\ref{Pe2C}, this   yields the required result. 
\end{proof}

\begin{proof}[Proof of Theorem~$\ref{cr.a7}$]
Suppose first that $\bh(t)=0$ for all small and for all large $t>0$. 
Then according to Theorem~\ref{thm.a2} we have 
$s_{n}(\bHank(\bh ))=O(n^{-\beta})$ for all $\beta$ such that $M(\beta)\leq M(\alpha)$. 
Inspecting the formula \eqref{c5} for $M(\alpha)$, we find that we can always 
choose $\beta>\alpha$ with $M(\beta)=M(\alpha)$. Thus, we have 
$s_n(\bHank(\bh))=O(n^{-\beta})=o(n^{-\alpha})$ as $n\to\infty$.

Now let us consider the general case. 
Let $\chi_0,\chi_\infty\in C^\infty(\bbR_+)$ be such that 
\begin{equation}
\chi_0(t)=
\begin{cases}
1& \text{for $t\leq1/4$,}
\\
0& \text{for  $t\geq1/2$,}
\end{cases}
\quad
\chi_\infty(t)=
\begin{cases}
0& \text{for $t\leq2$,}
\\
1& \text{for $t\geq4$.}
\end{cases}
\label{a7b}
\end{equation}
Put
$$
\zeta_N(t)=\chi_0( t /N)\chi_\infty(N t), \quad N\in\bbN,
$$
and $\bh_N=\bh\zeta_N$. As shown by the first step of the proof, 
$\bHank(\bh_N)\in\Sch_{p,\infty}^0$. 
It remains to prove that
\begin{equation}
\norm{\bHank(\bh)-\bHank(\bh_N)}_{\Sch_{p,\infty}}\to0, 
\quad N\to\infty.
\label{cb24}
\end{equation}
According to Theorem~\ref{thm.a2}, 
we need to check that
\begin{equation}
\sup_{t>0} t^{1+m}\jap{\log t}^{1/p}
\Abs{\bigl(\bh(t)(1-\zeta_N(t)\bigr)^{(m)}}
\to 0
\quad {\rm} \quad N\to\infty,
\label{cb22}
\end{equation}
for all $m=0,\dots,M$. 
By the construction of $\zeta_N$, we have
$$
\sup_{t>0}t^m \abs{(1-\zeta_N(t)\bigr)^{(m)}}\leq C_m \quad {\rm and} \quad  (1-\zeta_N(t)\bigr)^{(m)}=0 \;  {\rm if} \; t\in (4/N, N/4)
$$
 for all $m\geq0$.
Therefore  our assumption 
\eqref{z10} on $\bh$ implies  \eqref{cb22}  and hence \eqref{cb24}.
\end{proof}

\begin{remark}
By the result of \cite[Theorem 4.9]{GLP} (see also \cite[Example 6.1]{Yafaev2}),
one can construct
a bounded kernel $\bh(t)$ with one  jump discontinuity at some $t=t_0>0$ 
(and vanishing identically for all sufficiently small 
and all sufficiently large $t>0$)
such that $\bHank(\bh)\in\Sch_{1 ,\infty}$ but
$\bHank(\bh)\notin\Sch_{1 ,\infty}^0$. Similarly, for every $\alpha\in\bbN$, $\alpha\geq2$, there exist kernels   $\bh\in C^{\alpha-2}$, $\bh\notin C^{\alpha-1}$,
such that  $\bHank(\bh)\in\Sch_{1/\alpha,\infty}$ but
$\bHank(\bh)\notin\Sch_{1/\alpha,\infty}^0$. 
This shows that, at least for $\alpha\in\bbN$, $\alpha\geq2$, the condition  $\bh\in C^M$
with  $M= \alpha-2$ is  not sufficient for the validity of estimate \eqref{z9}.
\end{remark}

%%%%%%%%%%%%%%%%%%%%%%%%%%%%%%%%%%%%%%%%%%%%%%%%%
%%%%%%%%%%%%%%%%%%%%%%%%%%%%%%%%%%%%%%%%%%%%%%%%%
\section{Discrete representation}\label{sec.ca}
%%%%%%%%%%%%%%%%%%%%%%%%%%%%%%%%%%%%%%%%%%%%%%%%%
%%%%%%%%%%%%%%%%%%%%%%%%%%%%%%%%%%%%%%%%%%%%%%%%%

Recall that  the Hankel operator $\Gamma (h)$ is defined by formula \eqref{eq:a5} in the space $\ell^2(\bbZ_+)$. 
Here we prove Theorems~\ref{thm.z1}, \ref{thm.a1} and \ref{cr.a3}.    
The calculations follow closely those of Section~\ref{sec.cb}, so we will be brief 
in places where there is a complete analogy and concentrate only on the points of difference.

%%%%%%%%%%%%%%%%%%%%%%%%%%%%%%%%%%%%%%%%%%
\subsection{The case $\alpha <1/2$}\label{sec.ca1}
%%%%%%%%%%%%%%%%%%%%%%%%%%%%%%%%%%%%%%%%%%

We introduce the weighted $\ell^p$ class with the weight $v(j)= (j+1)^{-1}$:
$$
g\in\ell^p_v 
\quad\Leftrightarrow\quad 
\norm{g}_{\ell^p_v}^p=\sum_{j=0}^\infty \abs{g(j)}^pv(j)<\infty,
\quad
v(j)=\frac{1}{j+1},
$$
and the corresponding weak class
$$
g\in\ell^{p,\infty}_v 
\quad\Leftrightarrow\quad 
\norm{g}_{\ell^{p,\infty}_v}^p
=
\sup_{s>0} s^p \sum_{j: \abs{g(j)}>s} v(j)<\infty.
$$
For a sequence $h$, we denote by $h/v$ the sequence $\{(j+1)h(j)\}_{j=0}^\infty$.

%%%%%%%%%%%%%%%%%%
\begin{lemma}\label{lma.ca2}
%%%%%%%%%%%%%%%%%%
Let $h$ be a sequence of complex numbers such that $h/v\in \ell_v^{p,\infty}$ for some $p>2$.
Then $\Hank(h)\in \Sch_{p,\infty}$ and 
$$
\norm{\Hank(h)}_{\Sch_{p,\infty}}\leq C\norm{h/v}_{\ell^{p,\infty}_v}.
$$
\end{lemma}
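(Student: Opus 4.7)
The strategy mirrors exactly the proof of Lemma~\ref{lma.cb2} in the continuous setting: I would interpolate between the Hilbert matrix boundedness result and the Hilbert--Schmidt condition.

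First I would establish the two endpoint estimates for the linear map $T: h/v \mapsto \Gamma(h)$. At the $\ell_v^\infty = \ell^\infty$ endpoint, the standard bound on the Hilbert matrix (\eqref{a1d}--\eqref{a1h}) yields
$$
\norm{\Gamma(h)} \leq C \sup_{j\geq 0}(j+1)\abs{h(j)} = C\norm{h/v}_{\ell^\infty_v},
$$
so $T: \ell^\infty_v \to \calB$ is bounded. At the $\ell^2_v$ endpoint, the Hilbert--Schmidt identity \eqref{z12} gives
$$
\norm{\Gamma(h)}_{\Sch_2}^2 = \sum_{j=0}^\infty (j+1)\abs{h(j)}^2 = \norm{h/v}_{\ell^2_v}^2,
$$
so $T: \ell^2_v \to \Sch_2$ is bounded with norm $1$.

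Next I would invoke the real interpolation ($K$-method) functor with parameters $\theta = 1-2/p$ and $q = \infty$, using the discrete analogues of the identities \eqref{eq:int},
$$
(\ell^2_v, \ell^\infty_v)_{\theta,\infty} = \ell^{p,\infty}_v, \qquad (\Sch_2, \calB)_{\theta,\infty} = \Sch_{p,\infty}.
$$
These are standard: the first follows from the general theory of Lorentz spaces over a $\sigma$-finite measure space (applied to $\bbZ_+$ with the measure $v$), while the second is already cited in \eqref{eq:int}. The interpolation theorem then gives boundedness of $T: \ell^{p,\infty}_v \to \Sch_{p,\infty}$ with a constant $C_p$ depending only on $p$, which is precisely the required estimate.

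The main obstacle is essentially bookkeeping: checking that the weighted Lorentz \emph{sequence} space $\ell^{p,\infty}_v$ arises from real interpolation between $\ell^2_v$ and $\ell^\infty$. This is well known and the argument is identical to the continuous case once Lebesgue measure is replaced by $v$; since the authors have already taken the analogous continuous identity for granted in \eqref{eq:int}, no new technical difficulty arises here.
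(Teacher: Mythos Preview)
Your proposal is correct and follows essentially the same route as the paper: the paper's proof simply says that, as in the continuous case, the result follows by real interpolation between the Hilbert matrix bound $\norm{\Hank(h)}\leq \pi\norm{h/v}_{\ell^\infty_v}$ and the Hilbert--Schmidt relation \eqref{z12}. The only cosmetic difference is that the paper records the sharp constant $\pi$ at the $\ell^\infty$ endpoint rather than a generic $C$.
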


\begin{proof}
As in the continuous case, the result follows by real interpolation between the estimates
$$
\norm{\Hank(h)} \leq \pi \norm{h/v}_{\ell^\infty}=\pi\norm{h/v}_{\ell^\infty_v}
$$
(which corresponds to the   bound $\norm{\Hank(h)} \leq\pi$ for the Hilbert matrix \eqref{a1d}),
and the Hilbert-Schmidt relation  \eqref{z12}.
\end{proof}

\begin{proof}[Proof of Theorem~$\ref{thm.z1}$]
Since $\abs{h(j)/v(j)}\leq C(\log(j+2))^{-\alpha }$, the required statement
follows from the elementary fact that the sequence $\{(\log(j+2))^{-\alpha}\}_{j=0}^\infty$
belongs to the class $\ell^{p,\infty}_v$ for $p=1/\alpha$. 
\end{proof}

Similarly to Theorem~\ref{thm.cb3}, we also have

\begin{theorem}
For all $p\geq2$, one has
$$
\norm{\Hank(h)}_{\Sch_p}^p
\leq
\pi^{p-2}
\sum_{j=0}^\infty (j+1)^{p-1}\abs{h(j)}^p.
$$
\end{theorem}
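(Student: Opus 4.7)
The plan is to prove the discrete statement by the same interpolation scheme used for the continuous Theorem~\ref{thm.cb3}, but employing the \emph{complex} interpolation method in order to extract the sharp constant $\pi^{p-2}$. Consider the linear map
$$
T\colon h/v\;\longmapsto\;\Hank(h),\qquad v(j)=(j+1)^{-1}.
$$
Two endpoint bounds are already at my disposal: at $p=\infty$, the operator-norm estimate $\norm{\Hank(h)}\leq\pi\norm{h/v}_{\ell^\infty}$ recorded in the proof of Lemma~\ref{lma.ca2} shows that $T\colon\ell^\infty_v=\ell^\infty\to\calB$ has norm at most $\pi$; at $p=2$, the Hilbert--Schmidt identity \eqref{z12} reads $\norm{\Hank(h)}_{\Sch_2}=\norm{h/v}_{\ell^2_v}$, so $T\colon\ell^2_v\to\Sch_2$ is an isometry.

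Next I would apply the complex interpolation method to the compatible pairs $(\ell^2_v,\ell^\infty_v)$ and $(\Sch_2,\calB)$ with parameter $\theta=1-2/p\in[0,1)$. The standard identifications give $[\ell^2_v,\ell^\infty_v]_\theta=\ell^p_v$ and $[\Sch_2,\calB]_\theta=\Sch_p$; since the complex method preserves multiplicative operator-norm bounds, $T$ extends to a map $\ell^p_v\to\Sch_p$ of norm at most $1^{1-\theta}\,\pi^{\theta}=\pi^{1-2/p}$. Raising
$$
\norm{\Hank(h)}_{\Sch_p}\;\leq\;\pi^{(p-2)/p}\,\norm{h/v}_{\ell^p_v}
$$
to the $p$-th power and unpacking
$$
\norm{h/v}_{\ell^p_v}^p
=\sum_{j=0}^\infty(j+1)^{p}\abs{h(j)}^p\,v(j)
=\sum_{j=0}^\infty(j+1)^{p-1}\abs{h(j)}^p
$$
yields the stated inequality.

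There is no serious obstacle: both endpoint estimates are already in hand, and the identifications of the relevant complex interpolation spaces (the weighted $\ell^p$ scale on the one hand, and the Schatten scale on the other) are classical, see \cite{BL}. The only point worth emphasising is the switch from real to complex interpolation: the $K$-method used in Lemma~\ref{lma.ca2} and in Theorem~\ref{thm.cb3} produces the same power scaling but with a larger, less explicit $p$-dependent constant, whereas the complex method respects multiplicative norm bounds exactly and is what produces the sharp prefactor $\pi^{p-2}$.
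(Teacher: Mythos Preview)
Your proposal is correct and is essentially the approach the paper intends: the paper states this theorem without proof, pointing to the continuous analogue Theorem~\ref{thm.cb3}, and the remark immediately following that theorem says explicitly that complex interpolation between the $\Sch_2$ and $\calB$ endpoints yields the sharp constant $C_p=\pi^{p-2}$. You have simply spelled out that argument in the discrete setting, using the Hilbert matrix bound and the Hilbert--Schmidt identity \eqref{z12} as endpoints and the multiplicativity of norms under complex interpolation to obtain $\pi^{1-2/p}$.
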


%%%%%%%%%%%%%%%%%%%%%%%%%%%%%%%%%%%%%%%%%%
\subsection{The case $\alpha \geq 1/2$}\label{sec.ca2}
%%%%%%%%%%%%%%%%%%%%%%%%%%%%%%%%%%%%%%%%%%
Here we prove Theorem~\ref{thm.a1} for $0<p\leq2$.
Let $w\in C_0^\infty(\bbR_+)$ be a function with the properties $w\geq0$, $\supp w=[1/2,2]$
and 
$$
\sum_{n=0}^\infty w(t/2^n)=1, \quad \forall t\geq1.
$$
Observe that the summation is  over $n\in\bbZ_+$ here, while it is over all $n\in\bbZ$ in \eqref{cb7b}.
Denote $w_n(j)=w(j/2^n)$ for $n\geq1$ and let $w_0$ be defined by 
$w_0(0)=w_0(1)=1$, $w_0(j)=0$ for $j\geq2$.
For a sequence of complex numbers $h=\{h(j)\}_{j\geq0}$, 
denote by $\fh_n$ the polynomial
\begin{equation}
\fh_n(\mu)=\sum_{j=0}^\infty w_n(j)h(j)\mu^j, 
\quad 
\mu\in\bbT, \quad n\geq0.
\label{ca8}
\end{equation}

Let us recall two results due to V.~Peller. 
The first one follows from Theorems~6.1.1, 6.2.1 and 6.3.1 in \cite{Peller}.

%%%%%%%%%%%%%%%%%%%%
\begin{theorem}\label{Pe1D} 
%%%%%%%%%%%%%%%%%%%%
The estimate
\begin{equation}
\norm{\Hank(h)}_{\Sch_{p}}^p
\leq 
C_p
\sum_{n=0}^\infty 2^n 
\int_{-\pi}^{\pi}
\abs{\fh_n(e^{i\theta})}^p d\theta, \quad p>0,
\label{ca5a}
\end{equation}
holds,
so that $\Hank(h)\in \Sch_p$ if  the r.h.s.  in \eqref{ca5a} is finite.
\end{theorem}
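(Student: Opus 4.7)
The plan is to prove \eqref{ca5a} via a Littlewood--Paley dyadic block decomposition. Write $h=\sum_{n\geq 0}h_n$ with $h_n(j)=w_n(j)h(j)$, so that the symbol of $\Gamma(h_n)$ is the polynomial $\fh_n$ from \eqref{ca8}, whose coefficient support lies in the dyadic interval $[2^{n-1},2^{n+1}]$. The argument then splits into a \emph{block estimate} bounding $\|\Gamma(h_n)\|_{\Sch_p}^p$ by $C_p\,2^n\|\fh_n\|_{L^p}^p$ and an \emph{assembly step} recombining the blocks using their frequency disjointness.

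For the block estimate, the target is
\[
\|\Gamma(h_n)\|_{\Sch_p}^p\leq C_p\,2^n\int_{-\pi}^{\pi}|\fh_n(e^{i\theta})|^p\,d\theta.
\]
At $p=2$ this is essentially free from the Hilbert--Schmidt identity \eqref{z12}: the coefficient support forces $\sum_j(j+1)|h_n(j)|^2\leq 2^{n+2}\sum_j|h_n(j)|^2$, and Parseval identifies the right-hand side with $\|\fh_n\|_{L^2}^2$ up to a universal constant. The case $p=1$ is the deep one: one produces a nuclear decomposition of $\Gamma(h_n)$ by expanding $\fh_n$ against a de la Vall\'ee Poussin type kernel adapted to the dyadic frequency band, writing $\fh_n$ as a weighted average of unimodular shifted atoms whose individual Hankel components have essential rank $\lesssim 2^n$. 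The remaining exponents $p\in(0,1)\cup(1,2)$ are handled by complex interpolation of Schatten ideals against Hardy/Besov spaces, and $p>2$ by extrapolating from $p=2$ on each fixed-rank block.

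For the assembly step, the coefficient supports of $h_n$ and $h_m$ are disjoint once $|n-m|\geq 2$, so after splitting into even- and odd-indexed $n$ the operators $\Gamma(h_n)$ in each parity class have disjoint anti-diagonal support bands and are almost orthogonal. For $p\leq 1$ one applies the $p$-subadditivity $\|A+B\|_{\Sch_p}^p\leq\|A\|_{\Sch_p}^p+\|B\|_{\Sch_p}^p$ term-by-term within each parity class. For $1<p\leq 2$ the corresponding $\ell^p$-summation $\|\sum_nA_n\|_{\Sch_p}^p\lesssim\sum_n\|A_n\|_{\Sch_p}^p$ for almost-orthogonal blocks follows from a standard duality argument against $\Sch_{p'}$; for $p>2$ one interpolates against the $p=2$ bound. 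I expect the main obstacle to be the $p=1$ block estimate itself, which is the core of the classical Peller theory and corresponds in spirit to the $H^1$-characterization of trace-class Hankel operators: constructing an atomic decomposition at the dyadic scale $2^n$ whose rank-one pieces assemble without logarithmic loss is nontrivial, whereas once that endpoint is in place the interpolation to all $p>0$ and the summation via frequency disjointness are essentially mechanical.
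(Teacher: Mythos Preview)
The paper does not supply its own proof of this statement; it is quoted as a known result, attributed to Theorems~6.1.1, 6.2.1 and 6.3.1 of Peller's book~\cite{Peller}. Your outline is broadly in the spirit of Peller's argument as far as the block estimate and the range $p\leq 1$ go: the $p$-subadditivity of $\norm{\cdot}_{\Sch_p}^p$ does give the assembly for free there, and the $p=1$ block bound via a de~la~Vall\'ee~Poussin--type decomposition is indeed the heart of the matter.

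However, your assembly step for $p>1$ has a genuine gap. Disjoint anti-diagonal support does \emph{not} provide any usable orthogonality between the $\Gamma(h_n)$: these operators share both row and column indices, and the inequality $\norm{\sum_n A_n}_{\Sch_p}^p\lesssim\sum_n\norm{A_n}_{\Sch_p}^p$ is false in this setting once $p>2$. Already in $2\times 2$: with $A_0=E_{00}$, $A_1=E_{01}+E_{10}$, $A_2=E_{11}$ (pairwise disjoint anti-diagonals) one has $\norm{A_0+A_1+A_2}_{\Sch_p}^p=2^p$ but $\sum_k\norm{A_k}_{\Sch_p}^p=4$, and $2^p>4$ for $p>2$. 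Your ``duality against $\Sch_{p'}$'' and ``interpolate against $p=2$'' suggestions do not repair this, because neither endpoint delivers an $\ell^p$-type summation over blocks on the Schatten side; the triangle inequality only ever gives $\ell^1$. Peller's actual route for $p>1$ is different in kind: rather than assembling block-by-block in $\Sch_p$, one interpolates the \emph{entire} linear map $\omega\mapsto\Gamma(h)$ between appropriate endpoints (equivalently, one uses a retract argument identifying $B^{1/p}_{pp}$ with a complemented subspace of a weighted vector-valued $\ell^p$), so that the $\ell^p$-summation is built into the Besov norm on the source side rather than being forced on the Schatten side after the fact.
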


The next result is deduced from Theorem~\ref{Pe1D} by the real  interpolation method using the retract arguments (see, e.g., the book \cite[Section~6.4]{BL}).

%%%%%%%%%%%%%%%%%%%%
\begin{theorem}\label{Pe2D}\cite[Theorem~6.4.4]{Peller}
%%%%%%%%%%%%%%%%%%%%
Let
\begin{equation}
\babs{h}_p^p
=
\sup_{s>0}s^p\sum_{n=0}^\infty 2^n 
\abs{\{\theta\in [-\pi, \pi): \abs{\fh_n(e^{i\theta})}>s\}}.
\label{ca9}
\end{equation}
Then $\Hank(h) \in \Sch_{p,\infty}$ and
\begin{equation}
\norm{\Hank(h)}_{\Sch_{p,\infty}}
\leq
C_p
\babs{h}_p. 
\label{ca5b}
\end{equation}
 \end{theorem}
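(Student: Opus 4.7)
The plan is to deduce Theorem~\ref{Pe2D} from Theorem~\ref{Pe1D} by the real interpolation method, reformulating the right-hand sides of \eqref{ca5a} and \eqref{ca9} as $L^p$ and $L^{p,\infty}$ quasi-norms on a single measure space, and using a Littlewood--Paley retract to transfer the estimates from $\Sch_p$ to $\Sch_{p,\infty}$.

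\emph{Set-up.} On $Y:=\bbZ_+\times[-\pi,\pi)$, introduce the measure $d\mu=\sum_{n\geq 0}2^n\,\delta_n\otimes d\theta$. For a function $F$ on $Y$, write $F_n(\theta):=F(n,\theta)$. Then, for the choice $F(n,\theta)=\fh_n(e^{i\theta})$, Theorem~\ref{Pe1D} becomes
$$\|\Hank(h)\|_{\Sch_p}^p\leq C_p\|F\|_{L^p(\mu)}^p,$$
and the expression \eqref{ca9} is $\babs{h}_p^p=\|F\|_{L^{p,\infty}(\mu)}^p$. The classical real-interpolation identities
$$(L^{p_0}(\mu),L^{p_1}(\mu))_{\theta,\infty}=L^{p,\infty}(\mu),\qquad(\Sch_{p_0},\Sch_{p_1})_{\theta,\infty}=\Sch_{p,\infty},$$
with $0<p_0<p<p_1$ and $1/p=(1-\theta)/p_0+\theta/p_1$, are available in the quasi-Banach range as well.

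\emph{Retract and coretract.} Let $T:h\mapsto\{\fh_n\}$ be the Littlewood--Paley coretract from \eqref{ca8}, and define a retract $R$ acting on sequences of functions on $\bbT$ by
$$(R\{G_n\})(j):=\sum_{n\geq 0}\widehat{G_n}(j),\qquad j\geq 0,$$
where $\widehat{G_n}(j)$ denotes the $j$-th Fourier coefficient. Since $\widehat{\fh_n}(j)=w_n(j)h(j)$ and $\sum_n w_n(j)=1$, one has $R\circ T=\mathrm{id}$. The support condition $\supp w_n\subset[2^{n-1},2^{n+1}]$ ensures that $(TRF)_n$ depends only on the components $F_m$ with $|m-n|\leq 1$, so a Schur-type summation (using the $p$-triangle inequality when $p<1$) yields
$$\|TRF\|_{L^p(\mu)}\leq C_p\|F\|_{L^p(\mu)}$$
uniformly in $0<p\leq\infty$. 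Combining with Theorem~\ref{Pe1D} applied to the sequence $RF$ gives a bound $\|\Hank(RF)\|_{\Sch_p}\leq C_p'\|F\|_{L^p(\mu)}$, i.e.\ the linear map $\Phi(F):=\Hank(RF)$ is bounded from $L^p(\mu)$ to $\Sch_p$ for every $p>0$.

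\emph{Interpolation and conclusion.} Pick $0<p_0<p<p_1$ straddling the given $p$ and interpolate the endpoint bounds $\Phi:L^{p_i}(\mu)\to\Sch_{p_i}$. This produces a bounded map $\Phi:L^{p,\infty}(\mu)\to\Sch_{p,\infty}$. Specializing to $F=Th$ and using $RT=\mathrm{id}$,
$$\|\Hank(h)\|_{\Sch_{p,\infty}}=\|\Phi(Th)\|_{\Sch_{p,\infty}}\leq C\|Th\|_{L^{p,\infty}(\mu)}=C\babs{h}_p,$$
which is \eqref{ca5b}. The technical heart of the argument is the uniform boundedness of the almost-diagonal operator $TR$ on $L^p(\mu)$ at both interpolation endpoints simultaneously, together with the quasi-Banach versions of the real-interpolation identities and of the retract lemma. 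Once $\alpha\geq 1$ we have $p\leq 1$, and ordinary Banach-space interpolation does not directly apply; one must work within the quasi-Banach framework, which is precisely what the retract/coretract machinery of \cite[Section~6.4]{BL} is designed to handle.
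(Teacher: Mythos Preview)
Your overall strategy is exactly what the paper indicates: deduce Theorem~\ref{Pe2D} from Theorem~\ref{Pe1D} by real interpolation between two endpoint exponents $p_0<p<p_1$, using the retract formalism of \cite[Section~6.4]{BL} to pass from the sequence-valued $L^p(\mu)$ scale to the $\Sch_p$ scale. The paper itself gives no further detail (it simply cites \cite[Theorem~6.4.4]{Peller} and, for the continuous analogue, sketches the same outline), so your write-up is in fact more explicit than the paper's.

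There is, however, a slip in your choice of retract. With $(R\{G_n\})(j)=\sum_n\widehat{G_n}(j)$, the assertion that $(TRF)_n$ depends only on the components $F_m$ with $\abs{m-n}\leq1$ is false for general $F\in L^p(\mu)$: the sum $\sum_j w_n(j)\widehat{F_m}(j)\mu^j$ picks up the Fourier coefficients of $F_m$ over the window $[2^{n-1},2^{n+1}]$, and nothing forces these to vanish when $\abs{m-n}$ is large. Without the almost-diagonal structure, the bound $\norm{TRF}_{L^p(\mu)}\leq C_p\norm{F}_{L^p(\mu)}$ fails, and with it the endpoint boundedness of $\Phi$. The standard remedy (and this is what one finds in Peller's argument) is to insert an auxiliary bump: pick $\widetilde w\in C_0^\infty(\bbR_+)$ with $\widetilde w\equiv1$ on $[1/2,2]$ and $\supp\widetilde w\subset[1/4,4]$, set $\widetilde w_n(j)=\widetilde w(j/2^n)$, and redefine $(R\{G_n\})(j)=\sum_n\widetilde w_n(j)\,\widehat{G_n}(j)$. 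Then $\widetilde w_n w_n=w_n$ still gives $RT=\mathrm{id}$, while $w_n\widetilde w_m=0$ for $\abs{m-n}\geq3$ makes $TR$ genuinely almost diagonal, and your Schur-type summation goes through at both endpoints. With this correction your argument is complete and coincides with the intended proof.
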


\begin{remark}\label{Ne} 
The results of \cite{Peller} also show that if $\Hank(h)\in \Sch_p$ or $\Hank(h) \in \Sch_{p,\infty}$, then the r.h.s. of    \eqref{ca5a} or  \eqref{ca9} are  necessary finite.
 \end{remark}

  Our goal is to show that under the assumptions  of Theorem~$\ref{thm.a1}$ the expression  \eqref{ca9} is finite. Note that, for the sequence $h(j)= j^{-1} (\log j)^{-\alpha}$, $j\geq 2$, the symbol \eqref{eq:sy} is singular at the point $\mu=1$. Therefore this point requires a special treatment.
  
Let us display two elementary identities. 
The first one is   the ``summation by parts formula":
\begin{equation}
\sum_{j=0}^\infty u(j) v^{(M)} (j)= (-1)^M \sum_{j=0}^\infty u^{(M)}(j) v (j+M)
\label{eq:ibp}
\end{equation}
where it is assumed that at least one of the sequences $u$ or $v$ 
vanishes  for $j=0,\ldots, M-1$ and for all large $j$. 
The second one is the  variant of the Leibniz rule for the product $(u v)  (j) = u (j)  v  (j)$: 
\begin{equation}
(u v)^{(M)} (j)
=   
\sum_{m=0}^M \binom{M}{m}u^{(M-m)}(j+m ) v^{(m)} (j).
\label{eq:ibp1}
\end{equation}

%%%%%%%%%%%%%%%%%%
\begin{lemma}\label{lma.ca1}
%%%%%%%%%%%%%%%%%%
Assume the hypothesis of Theorem~$\ref{thm.a1}$. Then for any $q>1/M$
and for all $n\in\bbN$ such that $2^{n-1}\geq M$ one has the estimates
\begin{align}
\norm{\fh_n}_{L^\infty}
&\leq 
\sum_{j=2^{n-1}}^{2^{n+1}} \abs{h(j)},
\label{ca21}
\\
2^n \norm{\fh_n}_{L^q}^q
&\leq 
C_q\biggl(\sum_{m=0}^M \sum_{j=2^{n-1}-M}^{2^{n+1}} (1+j)^m \abs{h^{(m)}(j)}\biggr)^q.
\label{ca22}
\end{align}
\end{lemma}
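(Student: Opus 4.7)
The plan is to run the same argument as in the proof of Lemma~\ref{lma.cb1}, with summation by parts replacing integration by parts. The bound \eqref{ca21} is immediate from the definition \eqref{ca8}: since $|\mu|=1$, $0\leq w_n\leq 1$, and $\supp w_n\subset[2^{n-1},2^{n+1}]$, we have $|\fh_n(\mu)|\leq\sum_{j=2^{n-1}}^{2^{n+1}}|h(j)|$. For \eqref{ca22} we split
\[
2^n\norm{\fh_n}_{L^q}^q
= 2^n\int_{|\theta|\leq 2^{-n}}|\fh_n(e^{i\theta})|^q d\theta
+ 2^n\int_{2^{-n}\leq |\theta|\leq \pi}|\fh_n(e^{i\theta})|^q d\theta
\]
and handle each piece separately. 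The first piece is bounded by $2\norm{\fh_n}^q_{L^\infty}$, which is controlled by \eqref{ca21} and so contributes the $m=0$ term on the right-hand side of \eqref{ca22}.

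For the second piece, I would exploit oscillation via summation by parts. Writing $v(j)=\mu^j$, the key observation is that $v^{(M)}(j)=(\mu-1)^M v(j)$, so $v(j)=(\mu-1)^{-M}v^{(M)}(j)$. Since $2^{n-1}\geq M$, the sequence $u(j):=w_n(j)h(j)$ vanishes for $j=0,\ldots,M-1$ (and for all large $j$), so the hypothesis of \eqref{eq:ibp} is satisfied, giving
\[
\fh_n(\mu)=(\mu-1)^{-M}\sum_{j=0}^\infty u(j)v^{(M)}(j)
=(-(\mu-1))^{-M}\sum_{j=0}^\infty (w_n h)^{(M)}(j)\,\mu^{j+M}.
\]
Then I would apply the Leibniz rule \eqref{eq:ibp1} to $(w_n h)^{(M)}$, using the elementary estimate $|w_n^{(k)}(j)|\leq C_k 2^{-nk}$ (which follows by viewing $w_n^{(k)}(j)$ as a finite difference of the smooth function $t\mapsto w(t/2^n)$ whose $k$-th derivative is $O(2^{-nk})$, analogous to \eqref{eq:dw}). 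Since $w_n^{(M-m)}(j+m)$ is supported in $j\in[2^{n-1}-M,2^{n+1}]$, this yields
\[
|\fh_n(e^{i\theta})|\leq C_M |e^{i\theta}-1|^{-M}\sum_{m=0}^M 2^{-n(M-m)}\sum_{j=2^{n-1}-M}^{2^{n+1}}|h^{(m)}(j)|.
\]
On the range $j\asymp 2^n$ we have $2^{nm}\leq C_m(1+j)^m$, so after absorbing the factor $2^{-nM}$ the sum is controlled by $2^{-nM}\cdot S$, where $S$ denotes the right-hand side of \eqref{ca22} without the $q$-th power and the factor $C_q$.

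Finally, using $|e^{i\theta}-1|\geq c|\theta|$ for $|\theta|\leq \pi$ and the fact that $Mq>1$,
\[
2^n\int_{2^{-n}\leq |\theta|\leq\pi}|\fh_n(e^{i\theta})|^q d\theta
\leq C\, 2^n\cdot 2^{-nMq}S^q \int_{2^{-n}}^{\pi}\theta^{-Mq}d\theta
\leq \frac{C}{Mq-1}\,S^q,
\]
which combined with the small-$\theta$ estimate yields \eqref{ca22}. The step I expect to demand the most care is the bookkeeping for summation by parts: verifying the vanishing of the boundary terms (which is precisely why the hypothesis $2^{n-1}\geq M$ enters) and tracking the correct support of $(w_n h)^{(M)}$, which is responsible for the shifted lower limit $2^{n-1}-M$ appearing in \eqref{ca22} but absent from its continuous counterpart \eqref{cb13}.
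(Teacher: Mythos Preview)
Your proposal is correct and follows essentially the same route as the paper's proof: the same splitting of the $L^q$ integral at $|\theta|=2^{-n}$, the same summation-by-parts identity based on $\mu^{(M)}(j)=(\mu-1)^M\mu^j$ (with the hypothesis $2^{n-1}\geq M$ used exactly to make the boundary terms vanish), the same Leibniz expansion with the bound $|w_n^{(k)}(j)|\leq C_k 2^{-nk}$, and the same final integration using $Mq>1$. Your remarks about the shifted lower limit $2^{n-1}-M$ coming from the support of $(w_n h)^{(M)}$ are precisely on point.
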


\begin{proof}
The first estimate follows from the fact that 
$0\leq w_n(j)\leq1$ for all $j$ and $w_n(j)=0$ for $j\leq 2^{n-1}$ and for $j\geq 2^{n+1}$. 
To estimate the $L^q$ norm, we write
\begin{equation}
2^{n}(2\pi) \norm{\fh_n}_{L^q}^q
=
2^n\int_{\abs{\theta} < 2^{-n}} \abs{\fh_n(e^{i\theta})}^q d\theta
+
2^n\int_{\abs{\theta}\geq2^{-n}} \abs{\fh_n(e^{i\theta})}^q d\theta
\label{ca13}
\end{equation}
and estimate each term separately. 
For the first term, we use the estimate \eqref{ca21}:
\begin{equation}
2^n
\int_{\abs{\theta} < 2^{-n}}
\abs{\fh_n(e^{i\theta})}^q d\theta
\leq 
2
\norm{\fh_n}_{L^\infty}^q
\leq
2 \Big(\sum_{j=2^{n-1}}^{2^{n+1}} \abs{h(j)}\Big)^q.
\label{ca13a}
\end{equation}
In order to estimate the second integral in \eqref{ca13}, 
we need to perform a summation by parts calculation. 
Let us set $\mu (j)=\mu ^j$, then the iterated difference is $\mu^{(M)}(j)=(\mu-1)^M \mu(j)$. 
Using the definition \eqref{ca8} of $\fh_n$ 
and the summation by parts formula \eqref{eq:ibp} 
for sequences $u(j)=\mu (j)$, $v(j)=w_{n} (j) h(j)$,  we obtain that 
\begin{multline}
\fh_n(\mu)
=
(\mu-1)^{-M}
\sum_{j=0}^\infty 
w_n(j)h(j) \mu^{(M)}(j)
\\
=
(1-\mu)^{-M} 
\sum_{j=0}^\infty 
(w_n h)^{(M)}(j)\mu(j+M).
\label{ca13b}
\end{multline}
Since   (cf. \eqref{eq:dw})
$$
\abs{w^{(k)}_n(j)}
\leq 
C_k2^{-nk}, 
\quad 
n\geq2,
\quad
k\geq0,
$$
 it follows from the Leibniz rule \eqref{eq:ibp1} that 
  $$
\abs{(w_n h)^{(M)}(j)}
\leq
C_M\sum_{m=0}^M 2^{-n(M-m)}\abs{h^{(m)}(j)}.
$$
Substituting this into \eqref{ca13b} and using the fact that 
$w^{(k)}_n(j)=0$  for $j\leq 2^{n-1}-k$ and  for $j\geq 2^{n+1}$, we obtain
  the estimate  
\begin{multline*}
\abs{\fh_n(\mu)}
\leq
\abs{1-\mu}^{-M} \sum_{j=2^{n-1}-M}^{2^{n+1}} \abs{(w_n h)^{(M)}(j)}
\\
\leq
C_M\abs{1-\mu}^{-M} \sum_{m=0}^M 2^{-n(M-m)}\sum_{j=2^{n-1}-M}^{2^{n+1}}\abs{h^{(m)}(j)}
\\
\leq
C_M\abs{1-\mu}^{-M} 2^{-nM} 
\sum_{m=0}^M \sum_{j=2^{n-1}-M}^{2^{n+1}}(1+j)^m \abs{h^{(m)}(j)}.
\end{multline*}
From here we get
\begin{multline*}
2^n
\int_{\abs{\theta}\geq2^{-n}} \abs{\fh_n(e^{i\theta})}^q d\theta
\leq
\biggl(
2^{n-nMq}\int_{\abs{\theta}\geq 2^{-n}}
\abs{1-e^{i\theta}}^{-Mq}d\theta
\biggr)
\\
\times
\biggl(
C_M
\sum_{m=0}^M \sum_{j=2^{n-1}-M}^{2^{n+1}}(1+j)^m \abs{h^{(m)}(j)}
\biggr)^q.
\end{multline*}
Since $Mq>1$, the first factor here can be estimated by a constant independent of $n$. 
Combining this with \eqref{ca13a}, we arrive at \eqref{ca22}.
\end{proof}

\begin{proof}[Proof of Theorem~$\ref{thm.a1}$ for $\alpha\geq 1/2$]
Denote 
$$
A_m
=
\sup_{j\geq0}
(j+1)^{1+m}(\log (j+2))^{\alpha}\abs{h^{(m)}(j)}, 
\quad
m=0,\dots,M.
$$
Substituting these bounds into the estimates \eqref{ca21}  and  \eqref{ca22}, we obtain
\begin{align*}
\norm{\fh_n}_{L^\infty}
&\leq
C A_0\jap{n}^{-\alpha}, 
\\
2^n\norm{\fh_n}^q_{L^q}
&\leq
C (A_0+\dots+A_M)^q\jap{n}^{-q \alpha}, 
\end{align*}
if  $2^{n-1}\geq M$. 
Using these estimates and arguing exactly as in the proof of Theorem~\ref{thm.a2}, 
we find that
$$
\babs{h}_p
\leq 
C(A_0+\dots+A_M),
$$
and so by \eqref{ca5b} we are done.
\end{proof}

\begin{proof}[Proof of Theorem~$\ref{cr.a3}$]
Let  $\chi_0\in C^\infty(\bbR_+)$ be as in \eqref{a7b}.
Set $\zeta_N(j)=\chi_0(j/N)$ and consider the truncated sequence $h_{N} = h \zeta_N$.
Then $\Hank(h_N)$ is a finite rank operator. Let us show that $\Hank(h_N )$ converges to $\Hank(h )$ in the quasi-norm 
of $\Sch_{p,\infty}$.
Note that
$$
\sup_{j\geq0}
(j+1)^m
\abs{\zeta^{(m)}_N(j)}\leq C_m, \quad m\geq0,
$$
with constants $C_{m}$ not depending on $N$. Therefore it follows from estimate \eqref{z7} and the Leibniz rule   \eqref{eq:ibp1}  that
$$
\norm{\Hank(h)-\Hank(h_N )}_{\Sch_{p,\infty}}
\leq
C(\alpha)
\sum_{m=0}^M
\sup_{j\geq N/4}
(1+j)^{1+m}(\log( j+2))^{\alpha}\abs{h^{(m)}(j)}, 
$$
where $p=1/\alpha$.
Under the assumptions of Theorem~\ref{cr.a3} the r.h.s. here tends to zero as $N\to \infty$. 
\end{proof}

Let us finally show that the condition  $M(\alpha)=[\alpha] + 1$   in Theorem~\ref{thm.a1} cannot be significantly improved.

\begin{example}\label{exa}
Let $\alpha \geq 2 $. We will construct a sequence $h(j)$ satisfying condition  \eqref{ca5} for  all $m\leq [\alpha] -2$   but such that estimate \eqref{eq:ca5} is violated. For an arbitrary $ \gamma\in ([\alpha]-1,\alpha)$,  consider the lacunary sequence
$$
h(j)=
\begin{cases}
2^{-\gamma n}, & \text{ if $j=2^n$, $n\in\bbN$,}
\\
0 & \text{ otherwise.}
\end{cases}
$$
In this case the iterated differences $h^{(m)}$ do not decay faster than the sequence $h$ itself. 
So for all $m$, we only have
$$
h^{(m)}(j)=O(j^{-\gamma}), \quad j\to\infty.
$$
Thus the hypothesis \eqref{ca5} of Theorem~\ref{thm.a1} is satisfied
for all   $m<\gamma-1$ and hence   for all  $m\leq [\alpha]-2$. 
On the other hand, for our sequence $h$ the polynomial  \eqref{ca8} is 
$\fh_n(z)=2^{-\gamma n}z^{2^n}$. 
So $\abs{\fh_n(e^{i\theta})}=2^{-\gamma n} $  and   the series in the r.h.s. of
\eqref{ca5a} becomes
$$
\sum_{n=0}^\infty 2^n 2^{-\gamma p n}.
$$
This series  diverges for $p=1/\gamma$. 
Therefore according to Remark~\ref{Ne}  (the necessity part of \cite[Theorem 6.2.1]{Peller}), 
we have $\Hank(h)\notin \Sch_{1/\gamma}$. Since $ \Sch_{1/\alpha,\infty}\subset \Sch_{1/\gamma}$ for $\gamma<\alpha$, it follows that $\Hank(h)\notin\Sch_{1/\alpha,\infty}$. Thus, one cannot take $M(\alpha)=[\alpha]-2$ in Theorem~\ref{thm.a1}.
\end{example}

\end{document}